\newif\ifdraft
\newtheorem{theorem}[equation]{Theorem}
\newtheorem*{theorem*}{Theorem}
\newtheorem{lemma}[equation]{Lemma}
\newtheorem*{lemma*}{Lemma}
\newtheorem{corollary}[equation]{Corollary}
\newtheorem{proposition}[equation]{Proposition}
\newtheorem*{proposition*}{Proposition}
\newtheorem{conjecture}[equation]{Conjecture}
\theoremstyle{definition}
\newtheorem{definition}[equation]{Definition}
\newtheorem*{definition*}{Definition}
\newtheorem{remark}[equation]{Remark}
\newtheorem{example}[equation]{Example}
\newtheorem*{example*}{Example}
\newtheorem*{problem*}{Problem}
\theoremstyle{plain}
\newcommand{\C}{\mathcal C}
\newcommand{\E}{\mathcal E}
\newcommand{\F}{\mathcal F}
\renewcommand{\H}{\mathcal H}
\renewcommand{\O}{\mathcal O}
\newcommand{\bR}{{\bf R}}
\newcommand{\CC}{\mathbb C}
\newcommand{\HH}{\mathbb H}
\newcommand{\PP}{\mathbb P}
\newcommand{\QQ}{\mathbb Q}
\newcommand{\ZZ}{\mathbb Z}
\newcommand{\DDD}{\mathbb D}
\newcommand{\DD}{\mathbf D}
\newcommand{\derR}{\mathbf R}
\newcommand{\xto}{\xrightarrow} 
\newcommand{\RHom}{{\bf R} \H om}
\newcommand*{\sHom}{\mathscr{H}\kern -.5pt om}
\DeclareMathOperator{\Ext}{Ext}
\newcommand{\sExt}{\E xt} 
\newcommand{\DB}{\underline{\Omega}} 
\DeclareMathOperator{\Gr}{Gr}
\DeclareMathOperator{\DR}{DR}
\DeclareMathOperator{\sing}{sing}
\DeclareMathOperator{\supp}{supp}
\DeclareMathOperator{\lcd}{lcd}
\DeclareMathOperator{\codim}{codim}
\DeclareMathOperator{\depth}{depth}
\DeclareMathOperator{\IC}{IC}
\DeclareMathOperator{\gr}{{\rm gr}}
\DeclareMathOperator{\lcdef}{lcdef}
\newcommand{\theoremref}[1]{\hyperref[#1]{Theorem~\ref*{#1}}}
\newcommand{\lemmaref}[1]{\hyperref[#1]{Lemma~\ref*{#1}}}
\newcommand{\definitionref}[1]{\hyperref[#1]{Definition~\ref*{#1}}}
\newcommand{\propositionref}[1]{\hyperref[#1]{Proposition~\ref*{#1}}}
\newcommand{\conjectureref}[1]{\hyperref[#1]{Conjecture~\ref*{#1}}}
\newcommand{\corollaryref}[1]{\hyperref[#1]{Corollary~\ref*{#1}}}
\newcommand{\exampleref}[1]{\hyperref[#1]{Example~\ref*{#1}}}
\let\old@caption\caption
\renewcommand*{\caption}[1]{%
	\setcounter{figure}{\value{equation}}%
	\stepcounter{equation}%
	\old@caption{#1}\relax%
}
\newcounter{intro}
\newtheorem{intro-conjecture}[intro]{Conjecture}
\newtheorem{intro-corollary}[intro]{Corollary}
\newtheorem{intro-theorem}[intro]{Theorem}
\newtheorem{intro-proposition}[intro]{Proposition}
\begin{document}

\title[Du Bois complexes of isolated singularities]{Injectivity and vanishing for the Du Bois complexes of isolated singularities}

\author[M.~Popa]{Mihnea~Popa}
\address{Department of Mathematics, Harvard University, 
1 Oxford Street, Cambridge, MA 02138, USA} 
\email{{\tt mpopa@math.harvard.edu}}

\author[W.~Shen]{Wanchun~Shen}
\address{Department of Mathematics, Harvard University, 
1 Oxford Street, Cambridge, MA 02138, USA} 
\email{{\tt wshen@math.harvard.edu}}

\author[A. D.~Vo]{Anh~Duc~Vo}
\address{Department of Mathematics, Harvard University, 
1 Oxford Street, Cambridge, MA 02138, USA} 
\email{{\tt ducvo@math.harvard.edu}}

\thanks{The authors were partially supported by the NSF grant DMS-2040378, and MP by DMS-2401498 as well.}

\subjclass[2020]{}

\begin{abstract}
We prove an injectivity theorem for the cohomology of the Du Bois complexes of varieties with isolated singularities. We use this to deduce vanishing statements for the cohomologies of higher Du Bois complexes of such varieties. Besides some extensions and conjectures in the non-isolated case, we also provide analogues for 
intersection complexes.
\end{abstract}

\maketitle

\makeatletter
\newcommand\@dotsep{4.5}
\def\@tocline#1#2#3#4#5#6#7{\relax
  \ifnum #1>\c@tocdepth 
  \else
    \par \addpenalty\@secpenalty\addvspace{#2}%
    \begingroup \hyphenpenalty\@M
    \@ifempty{#4}{%
      \@tempdima\csname r@tocindent\number#1\endcsname\relax
    }{%
      \@tempdima#4\relax
    }%
    \parindent\z@ \leftskip#3\relax
    \advance\leftskip\@tempdima\relax
    \rightskip\@pnumwidth plus1em \parfillskip-\@pnumwidth
    #5\leavevmode\hskip-\@tempdima #6\relax
    \leaders\hbox{$\m@th
      \mkern \@dotsep mu\hbox{.}\mkern \@dotsep mu$}\hfill
    \hbox to\@pnumwidth{\@tocpagenum{#7}}\par
    \nobreak
    \endgroup
  \fi}
\def\l@section{\@tocline{1}{0pt}{1pc}{}{\bfseries}}
\def\l@subsection{\@tocline{2}{0pt}{25pt}{5pc}{}}
\makeatother



\section{Introduction}
Let $X$ be a complex algebraic variety of dimension $n$, and for each $k \ge 0$ let $\DB_X^k$ be the $k$-th associated graded term of the filtered de Rham complex $\DB_X^\bullet$ with respect to the Hodge filtration, also called the $k$-th Du Bois complex of $X$. Given their growing importance in the study of singularities via Hodge theory, it has become essential to understand the finer homological properties of these complexes. This paper addresses the case of isolated singularities, by focusing on vanishing theorems for the cohomologies of Du Bois complexes, and injectivity theorems for the cohomologies of their duals.

\subsection*{Vanishing of cohomologies}
By definition, the Du Bois complexes $\DB_X^k$ are supported in degrees in the range $[0, n]$. Something better is in fact true; 
a well-known result of Steenbrink \cite[(4.1)]{Steenbrink-vanishing} says that
\[\H^i\DB_X^k=0 \,\,\,\,\,\,{\rm for ~all}\,\,\,\, i> n -k,\]
and without further assumptions on $k$ or the singularities, this is optimal. 

The vanishing of other higher cohomologies $\H^i\DB_X^k$ in the possible non-vanishing range is one way to measure how good the singularities of $X$ are. One of our main goals is to describe concrete conditions under which this holds. Previous results in this direction were obtained in \cite{MOPW} for hypersurfaces, and more generally in \cite{MP-LC} for local complete intersections. Here we address this in the case of isolated singularities, providing some results in the non-isolated case along the way as well.

The appropriate language for studying this problem is that of \emph{higher Du Bois singularities}, studied in \cite{MOPW}, \cite{JKSY}, 
\cite{MP-LC}, \cite{SVV} among others. At least in the local complete intersection case, this condition means that $\DB_X^p$ can be identified with 
the sheaf of K\"ahler differentials $\Omega_X^p$ for certain $p$. Along the way towards a general definition, the weaker notion of \emph{pre-$k$-Du Bois} singularities was introduced in \cite{SVV}; this simply means the vanishing of higher cohomologies, i.e. $\H^i \DB_X^p = 0$ for all $i >0$ and $p \le k$.
Cf. also \cite{Tighe}, using different terminology.

For fixed $k$, the question of whether the cohomology sheaf $\H^i \DB_X^k$ is zero for some $i > 0$ is therefore by definition nontrivial only if 
$X$ is not pre-$k$-Du Bois. Our main vanishing result studies the first degree where this is the case. Note that the answer is influenced by the algebraic properties
of the $0$-th cohomology sheaf $\H^0\DB^k_X$, which is known to always be torsion-free.

\begin{intro-theorem}\label{thm:vanishing-isolated}
 Let $X$ be a variety with pre-$(k-1)$-Du Bois isolated singularities. Then:
 \[\H^i\DB^k_X = 0 \quad \text{ {\rm for} } \quad 0<i<\depth\H^0\DB^k_X -1.\] 
\end{intro-theorem}

A reformulation of this theorem in terms of resolution of singularities and higher direct images of logarithmic forms can be found in Remark \ref{rmk:vanishing-log-poles}.

When $k = 0$, when the hypothesis means that there are no assumptions on $X$, this is due to Steenbrink \cite[Proposition 1]{Steenbrink-DB-invariants}. Using our method of proof however, combined with an injectivity result from \cite{KS-injectivity}, we can extend this case to arbitrary singular sets.\footnote{We will see that for $k = 0$ one can safely replace $\H^0 \DB_X^0$ by $\O_X$.}

\begin{intro-corollary}\label{cor:CM}
If a variety $X$ is (pre-)Du Bois away from a closed subset of dimension $s$, then 
    \[\H^i\DB^0_X = 0 \quad \text{ {\rm for} } \quad 0<i<\depth \O_X- s -1.\]
\end{intro-corollary}

A quick consequence is the fact that a Cohen-Macaulay variety of dimension $n$, with isolated singularities, is Du Bois if and only if the natural morphism 
$H^n (X, \O_X ) \to H^n (X, \DB_X^0)$ is injective (hence an isomorphism); see Corollary \ref{cor:DB-CM-criterion}.

The picture of vanishing results for higher cohomology sheaves is completed by the following ``sliding" rule, which is quite simple but seems to not have been noted before; it holds with no assumption on the singular locus.

\begin{intro-proposition}\label{prop:borderline-vanishing}
Let $X$ be an $n$-dimensional variety. If $k < n$ and $\H^{n-p -1}\DB_X^p=0$ for all $p \le k -1$, then 
\[\H^{n-k}\DB_X^k=0.\]
In particular $\H^n \DB_X^0 = 0$, and more generally if $X$ is pre-$(k-1)$-Du Bois, with $k < n$, then $\H^{n-k}\DB_X^k=0$.
\end{intro-proposition}

Further vanishing results and conjectures in the non-isolated case are discussed later in the Introduction. Note in particular Conjecture \ref{conj:vanishing-general} for an extension of Theorem \ref{thm:vanishing-isolated} to the general case.

\subsection*{Injectivity for the cohomologies of duals}
The key technical result of the paper,  used in the proof of Theorem \ref{thm:vanishing-isolated} as well as for other applications, is the following injectivity theorem for the cohomologies of the dual of the first Du Bois complex that is not a sheaf.

\begin{intro-theorem}\label{thm:injectivity-H0DB}
    Let $X$ be a variety with isolated pre-$(k-1)$-Du Bois singularities. Then the morphism
    \[\RHom_{\O_X}(\DB_X^k,\omega_X^{\bullet})\to \RHom_{\O_X}(\H^0\DB_X^k,\omega_X^{\bullet})\]
    in $\DD^b_{\rm coh} (X)$, obtained by dualizing the canonical map $\H^0\DB_X^k \to \DB_X^k$,
    is injective on cohomology.\footnote{Here $\omega_X^{\bullet}$ is the dualizing complex of $X$.}
\end{intro-theorem}

In Conjecture \ref{conj:injectivity-H0DB} we predict that the statement should hold even without the isolated singularities hypothesis.

An injectivity theorem of this type first appeared in the inspiring paper \cite{KS-injectivity} by Kov\'acs-Schwede for $k=0$.  It was then reinterpreted in terms of the Hodge filtration on local cohomology, and extended to arbitrary $k$ in the case of local complete intersections, 
in \cite{MP-LC} and \cite{MP-lci}. Fundamentally, such injectivity theorems are degeneration at $E_1$ phenomena for appropriate Hodge-theoretic objects, and are now understood to be one of the most essential properties of Du Bois complexes.

Note a subtlety: when $k = 0$, or when $X$ is LCI, K\"ahler differentials are rather well behaved under the $(k-1)$-Du Bois hypothesis, and therefore the right hand side in the previously known injectivity theorems is expressed in terms of $\Omega_X^k$; see e.g. Theorem \ref{thm:MP-injectivity}. This is not the case anymore for arbitrary singularities, where we found that the natural formulation of injectivity is in terms of $\H^0 \DB_X^k$.
Nevertheless, one can deduce from Theorem \ref{thm:injectivity-H0DB} statements about K\"ahler differentials as well. Here we only include 
a special case that is easier to state, while the general result is Corollary \ref{cor:injectivity-MP-gen}; the local cohomological defect ${\rm lcdef}(X)$ is defined in the next subsection, and the definition of $k$-Du Bois singularities is recalled in Section \ref{sec:higher-singularities}.

\begin{intro-corollary}
 Let $X$ be a variety with isolated $(k-1)$-Du Bois singularities, with $\dim X \ge 2$ and ${\rm lcdef} (X) = 0$.  Then the map
        \[\RHom_{\O_X}(\DB_X^k,\omega_X^{\bullet})\to \RHom_{\O_X}(\Omega_{X, {\rm tf}}^k,\omega_X^{\bullet})\]
 is injective on cohomology. Here $\Omega_{X, {\rm tf}}^k$ denotes the quotient of $\Omega_X^k$ by its torsion subsheaf.
 \end{intro-corollary}   

As for the proof of Theorem \ref{thm:injectivity-H0DB}, it is immediate to reduce to the case of projective varieties, in which case we show a more general fact, namely that the statement holds when $X$ is pre-$(k-1)$-Du Bois with possibly 
higher dimensional singular locus, but pre-$k$-Du Bois except at finitely many points; see Theorem \ref{thm:injectivity-H0DB-gen}. We use the degeneration at $E_1$ of the Du Bois version of the Hodge-to-de Rham spectral sequence, inspired by the approach in \cite{KS-injectivity} rather than that in \cite{MP-lci} (as we do not yet have a good theory of the Hodge filtration on local cohomology in the non-LCI case).

Another application of Theorem \ref{thm:injectivity-H0DB} is a very quick proof of the known fact that $k$-rational singularities are $k$-Du Bois, in the case of isolated singularities. We show this in Section \ref{scn:krat-kDB}, where we also recall what is known in this direction.

\subsection*{More on vanishing}
Going back to vanishing statements, our approach also provides a somewhat weaker statement for non-isolated singularities, which is essentially due to formal homological algebra. 

\begin{intro-proposition}\label{prop:vanishing-general}
 Let $X$ be a variety that is pre-$k$-Du Bois away from a closed subset of dimension $s$. Then:
 \[\H^i\DB^k_X = 0 \quad \text{ {\rm for} } \quad 0<i< m_k - s -1,\]
 where $m_k := {\rm min}~ \{\depth\H^0\DB^k_X, ~n - k - {\rm lcdef}(X)+ 1\}$.
\end{intro-proposition}

Here ${\rm lcdef} (X)$ denotes the \emph{local cohomological defect} of $X$, defined as 
$${\rm lcdef}(X)= \lcd (X, Y) - \codim_Y (X),$$ 
where $X \subseteq Y$ is an embedding in a smooth variety, with local cohomological dimension $\lcd (X, Y)$. It does not depend on the embedding, and ${\rm lcdef}(X)= 0$ for local complete intersections, but also for other interesting classes of varieties; for more details see Section \ref{scn:lcd}.  One can make sense of the depth of an object in the derived category, and what is really proven in Proposition \ref{prop:vanishing-general} is a consequence of a general fact about arbitrary such objects, namely the same statement but with $m_k$ replaced by the more abstract
$$m_k^\prime := {\rm min} \{\depth\H^0\DB^k_X, ~  \depth \DB^k_X + 1\}.$$
One then uses  one of the main results of \cite{MP-LC}, which implies that 
$$\depth \DB^k_X \ge n - k - {\rm lcdef} (X),$$
with equality for some $k$. 
To prove Theorem \ref{thm:vanishing-isolated}, we need to combine this abstract formulation with the main injectivity result, Theorem \ref{thm:injectivity-H0DB}.

We also explain in Example \ref{ex:vanishing-lci} how this formal statement can be used when $X$ is a local complete intersection in order to recover 
 \cite[Corollary 13.9]{MP-LC} (see also \cite{MOPW} for hypersurfaces), stating that if $s = \dim X_{{\rm sing}}$, then for all $k$ we have
\[\H^i\DB^k_X = 0 \quad \text{ {\rm for} } \quad 0<i< n - k - s -1.\]

At the end of Section \ref{scn:vanishing} we give examples where certain intermediate cohomologies do not vanish, showing the failure of some possible extensions of this result to the general setting.

\subsection*{Conjectures}
The main results of this paper are likely to admit natural extensions to the case of non-isolated singularities. The most important 
extends Theorem \ref{thm:injectivity-H0DB}.

\begin{intro-conjecture}\label{conj:injectivity-H0DB}
    Let $X$ be a variety with pre-$(k-1)$-Du Bois singularities. Then the map
    \[\RHom_{\O_X}(\DB_X^k,\omega_X^{\bullet})\to \RHom_{\O_X}(\H^0\DB_X^k,\omega_X^{\bullet})\]
    is injective on cohomology. 
\end{intro-conjecture}

This is proven in \cite{MP-lci} in the local complete intersection case, when $X$ has $(k-1)$-Du Bois singularities. The proof makes use however of the relationship
between the Hodge filtration and the Ext filtration on local cohomology, shown in \cite{MP-LC}, which is not available in general.

The natural extension of Theorem \ref{thm:vanishing-isolated} is the statement below. It follows from Conjecture \ref{conj:injectivity-H0DB}
with the same argument that derives Theorem \ref{thm:vanishing-isolated} from Theorem \ref{thm:injectivity-H0DB}.

\begin{intro-conjecture}\label{conj:vanishing-general}
 Let $X$ be a variety that is pre-$(k-1)$-Du Bois, and pre-$k$-Du Bois away from a closed subset of dimension $s$. Then
 \[\H^i\DB^k_X = 0 \quad \text{ {\rm for} } \quad 0<i<\depth\H^0\DB^k_X- s -1.\] 
\end{intro-conjecture}

\subsection*{Intersection complex}
In Ch.\ref{ch:IC} we establish analogues of the results described above, where the Du Bois complexes are replaced by \emph{intersection 
Du Bois complexes};\footnote{These are studied more thoroughly in the upcoming \cite{PP}.} in other words, from the point of view of Hodge modules and constructible sheaves, we are replacing the constant sheaf by the intersection complex. In this case, the higher Du Bois singularities conditions are replaced by higher rational singularities analogues; for local complete intersections, the link between these types of singularities and intersection complexes was already observed in \cite{CDM}.  Since their shape is rather similar, we refer to Sections \ref{scn:injectivity-IC} and \ref{scn:vanishing-IC} in the body of the paper for these statements.  The main injectivity result is Corollary \ref{cor:conj-IC}, while the main vanishing result is Corollary \ref{cor:IC-van}.

What is perhaps more fundamental here is that along the way we establish an injectivity result, Theorem \ref{thm:IC1}, that holds unconditionally and relates the duals of Du Bois complexes and their intersection analogues. This in turn uses a more technical variant, Theorem \ref{thm:injectivity-Rk}, communicated to us by S. G. Park.  Using this theorem, the main result in the intersection complex setting is a consequence of Theorem \ref{thm:injectivity-H0DB} for Du Bois complexes.

\medskip

\noindent 
\textbf{Acknowledgement.} We thank Brad Dirks, Mircea Musta\c t\u a, and especially Sung Gi Park, for valuable conversations and suggestions.

\section{Preliminaries}

Throughout this section, $X$ is a complex variety of dimension $n$.

\subsection{Du Bois complexes}

We recall the notion of \emph{filtered de Rham complex}, meant as a replacement for the standard de Rham complex on smooth varieties. Denoted 
$(\DB_X^\bullet, F)$, it is an object in the bounded derived category of filtered differential complexes on $X$, introduced by Du Bois in \cite{DB} along the lines suggested by work of Deligne.   For each $k \ge 0$, the shifted associated graded quotient
\[\DB^k_X : = \Gr^k_F \DB^\bullet_X[k],\]
is an object in $\DD^b_{\rm coh}(X)$, called the \emph{$k$-th Du Bois complex} of $X$. For a hyperresolution $\epsilon_{\bullet} \colon X_{\bullet}\to X$ of $X$, it can be computed as
\[\DB_X^k \simeq \bR \epsilon_{\bullet *} \Omega_{X_{\bullet}}^k.\]

Besides \cite{DB}, one can also find a detailed treatment of hyperresolutions and the construction of Du Bois complexes in \cite[Chapter V]{GNPP} or \cite[Chapter 7.3]{PS}. We only recall here a few basic facts that will be used freely throughout the paper:

\noindent
	$\bullet$ ~~For each $k\ge 0$, there is a canonical morphism $\Omega_X^k \to \DB_X^k$, which is an isomorphism if $X$ is smooth; here 
	$\Omega_X^k$ are the sheaves of K\"ahler differentials on $X$; see \cite[Section 4.1]{DB} or \cite[Page 175]{PS}. In particular, $\H^i\DB_X^k$ are supported on the singular locus of $X$, for all $i>0$.
	
\noindent
$\bullet$~~	There exists a Hodge-to-de Rham spectral sequence 
	$$E^{p,q}_1 = \HH^q (X, \DB_X^p) \implies H^{p + q} (X, \CC),$$
	which degenerates at $E_1$ if $X$ is projective; see \cite[Theorem 4.5(iii)]{DB} or \cite[Proposition 7.24]{PS}.

\noindent
$\bullet$~~For each $k\ge 0$, the sheaf $\H^0\DB_X^k$ embeds in $f_* \Omega_{\widetilde{X}}^k$, where $f\colon \widetilde{X} \to X$ is a resolution of $X$, 
so in particular it is torsion-free; see \cite[Remark 3.8]{h-differential}.

\subsection{Higher singularities}\label{sec:higher-singularities}
Following \cite{MOPW,JKSY,FL}, if $X$ is a local complete intersection (lci) subvariety of a smooth variety $Y$, then it is said to have \textit{$k$-Du Bois singularities} if the canonical morphisms $\Omega^p_X \to \DB^p_X$ are isomorphisms for all $0\le p\le k$, and 
 \textit{$k$-rational singularities} if the canonical morphisms $\Omega^p_X \to \DD_X(\DB^{n-p}_X)$  are isomorphisms for all $0\le p\le k$, where 
 $\DD_X (\cdot) : = \derR\H om (\cdot, \omega_X)$.
 
For non-lci varieties, however, even the condition $\Omega_X^1\xto{\sim}\DB_X^1$ turns out to be quite restrictive; as explained in \cite{SVV} the definitions above are not suitable anymore. 
In the general setting, new definitions of $k$-Du Bois and $k$-rational singularities are introduced in \textit{loc. cit.}.
As it is often sufficient, one can first consider weaker notions obtained by removing the conditions in cohomological degree 0.

\begin{definition}\label{definition:pre-k-DB-rational}
	We say that $X$ has \textit{pre-$k$-Du Bois} singularities if 
		\[ \H^i\DB_X^p=0 \,\,\,\,{\rm for~ all } \,\,\,\,i>0 \,\,\,\, {\rm and} \,\,\,\, 0\le p\le k.\]
We say that $X$ has \textit{pre-$k$-rational} singularities if
		\[ \H^i (\DD_X(\DB^{n-p}_X))=0 \,\,\,\,{\rm for~ all } \,\,\,\,i>0 \,\,\,\, {\rm and} \,\,\,\, 0\le p\le k.\]
\end{definition} 

Several other conditions are imposed in the full definition of general $k$-Du Bois and $k$-rational singularities. They agree with the classical notions of Du Bois and rational singularities when $k=0$, and  with the definitions mentioned above in the local complete intersection case. See \cite[Proposition 5.5, 5.6]{SVV} for more details.

\begin{definition} 
	We say that $X$ has \textit{$k$-Du Bois singularities} if it is seminormal, and 
	\begin{enumerate}
		\item $\codim_X (X_{\sing}) \ge 2k+1$;
		\item $X$ has pre-$k$-Du Bois singularities;
		\item $\H^0\DB_X^p$ is reflexive, for all $p \le k$.
	\end{enumerate}
\end{definition}

\begin{definition}
	We say that $X$ has \textit{$k$-rational singularities} if it is normal, and
	\begin{enumerate}
		\item $\codim_X (X_{\sing}) > 2k+1$;
		\item $X$ has pre-$k$-rational singularities. 
	\end{enumerate}
\end{definition}

\subsection{A new vanishing result}
Steenbrink's vanishing theorem \cite[(4.1)]{Steenbrink-vanishing} states that
\begin{equation}\label{eqn:Nakano}
\H^q\DB_X^p=0 \text{ for } p+q> n.
\end{equation}
In general, this result is the best possible. Indeed, \cite[Example 1.7]{MOPW} shows that the vanishing does not necessarily hold when $p+q=n$. However, we have

\begin{proposition}[Proposition \ref{prop:borderline-vanishing}]
If $k < n$ and $\H^{n-p -1}\DB_X^p=0$ for all $p \le k -1$, then 
\[\H^{n-k}\DB_X^k=0.\]
\end{proposition}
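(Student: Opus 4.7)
The plan is to exploit the Hodge-to-de Rham spectral sequence at the level of cohomology sheaves,
$$E_1^{p,q} \;=\; \H^q \DB_X^p \;\Longrightarrow\; \H^{p+q}\DB_X^\bullet,$$
associated to the Hodge filtration $F^\bullet$ on $\DB_X^\bullet$ in the filtered derived category. It converges because the filtration is bounded ($F^p = 0$ for $p > n$). The crucial input is the quasi-isomorphism $\DB_X^\bullet \simeq \underline{\CC}_X$ in $\DD^b(X)$, obtained by combining the holomorphic Poincar\'e lemma on the smooth terms of a hyperresolution with cohomological descent. This forces $\H^i \DB_X^\bullet = 0$ for every $i \ne 0$. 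Since $k < n$, the anti-diagonal $p + q = n$ satisfies $p+q \ne 0$, so $E_\infty^{p,q} = 0$ for every term on it; in particular $E_\infty^{k, n-k} = 0$, and the task reduces to identifying $E_\infty^{k, n-k}$ with $E_1^{k, n-k} = \H^{n-k}\DB_X^k$.

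Recall that $d_r$ has bidegree $(r,\,-r+1)$. For the outgoing differentials out of $E_r^{k, n-k}$, the target lies on the anti-diagonal $p + q = n + 1$, which is zero already at $E_1$ by Steenbrink's vanishing $\H^q\DB_X^p = 0$ for $q > n - p$. Hence all outgoing differentials vanish automatically, with no further hypothesis.

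For the incoming differentials $d_r \colon E_r^{k - r,\, n - k + r - 1} \to E_r^{k,\, n - k}$, the source is a subquotient of $E_1^{k - r,\, n - k + r - 1} = \H^{n - (k - r) - 1}\DB_X^{k - r}$. When $r > k$, the first index is negative and the term is zero trivially. When $1 \le r \le k$, setting $p = k - r$ gives $0 \le p \le k - 1$, and the hypothesis says exactly $\H^{n - p - 1}\DB_X^p = 0$. So every incoming differential is zero as well.

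Combining these observations, $E_\infty^{k, n-k} = E_1^{k, n-k} = \H^{n-k}\DB_X^k$, and the first paragraph forces this common value to vanish. There is no real obstacle here: one only needs the sheaf-level convergence of the spectral sequence (guaranteed by boundedness of $F^\bullet$) and the standard identification $\DB_X^\bullet \simeq \underline{\CC}_X$ coming from Du Bois's hyperresolution construction. This explains why the result, despite being simple, is a purely formal consequence of Steenbrink's vanishing together with the hypothesis in degrees one below the Steenbrink bound.
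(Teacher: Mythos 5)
Your argument is correct and coincides with the paper's own proof: both run the sheaf-level Hodge-to-de Rham spectral sequence $E_1^{p,q}=\H^q\DB_X^p\Rightarrow \H^{p+q}\DB_X^\bullet$, use $\DB_X^\bullet\simeq\CC_X$ to get $E_\infty^{k,n-k}=0$ when $k<n$, kill the outgoing differentials by Steenbrink's vanishing on the line $p+q=n+1$, and kill the incoming differentials using the hypothesis on $\H^{n-p-1}\DB_X^p$ for $p\le k-1$. Your version merely spells out the boundedness/convergence remark and the subquotient-of-$E_1$ justification a bit more explicitly.
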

\begin{proof}
Consider the spectral sequence associated to the Hodge filtration on the filtered de Rham complex 
 \[E_1^{p,q} :=\H^q\DB_X^p \implies \H^{p+q}\DB_X^{\bullet}.\]
Since $\DB_X^{\bullet}$ is quasi-isomorphic to $\CC_X$, the spectral sequence converges to $\CC_X$, placed in cohomological degree $0$.
Note that for any $\ell \ge 1$, the term $E_{\ell+1}^{k,n-k}$ is obtained as the cohomology of the complex
$$E_{\ell}^{k- \ell,n-k + \ell - 1}\to E_\ell^{k,n-k}\to E_\ell^{k+ \ell,n-k - \ell + 1},$$
and the right hand side is $0$ by ($\ref{eqn:Nakano}$), while the left hand side is $0$ by assumption. Therefore
$$\H^{n-k}\DB_X^k = E_1^{k,n-k}=E_{\infty}^{k,n-k}=0.$$ 
\end{proof}

\begin{corollary}\label{cor:borderline-vanishing}
If $X$ is pre-$(k-1)$-Du Bois, with $k < n$, then
\[\H^{n-k}\DB_X^k=0.\]
In particular, $X$ is pre-$k$-Du Bois for all $k$ if and only if it is pre-$(n-2)$-Du Bois.
\end{corollary}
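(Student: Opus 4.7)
The plan is to extract the first assertion as an immediate application of \propositionref{prop:borderline-vanishing}, and then bootstrap it to prove the ``if and only if'' via Steenbrink's vanishing \eqref{eqn:Nakano}.

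First I would observe that the hypothesis ``pre-$(k-1)$-Du Bois'' literally means $\H^i\DB_X^p=0$ for every $i>0$ and every $p\le k-1$. Since we assume $k<n$, i.e.\ $k-1\le n-2$, for each such $p$ the integer $n-p-1$ is at least $1$, hence positive, so $\H^{n-p-1}\DB_X^p=0$ falls within the hypothesis. Applying \propositionref{prop:borderline-vanishing} then delivers $\H^{n-k}\DB_X^k=0$, which is the first assertion.

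For the ``in particular'' statement, the forward direction is tautological. For the reverse, suppose $X$ is pre-$(n-2)$-Du Bois; I would verify pre-$k$-Du Bois separately in the three ranges $k\le n-2$, $k=n-1$, and $k\ge n$. The range $k\le n-2$ is the hypothesis itself. For $k\ge n+1$ we have $\DB_X^k=0$ (since $\DB_X^k$ can be computed from $\Omega^k$ on a hyperresolution of dimension $\le n$), and for $k=n$ Steenbrink's bound \eqref{eqn:Nakano} forces $\H^i\DB_X^n=0$ for all $i>0$ automatically. The only nontrivial case is $k=n-1$: Steenbrink's bound already yields $\H^i\DB_X^{n-1}=0$ for $i\ge 2$, so it remains to kill $\H^1\DB_X^{n-1}$; but this is precisely the first assertion applied with $k=n-1$ (using pre-$(n-2)$-Du Bois), noting $n-1<n$.

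There is no real obstacle here, since the only input beyond Steenbrink's classical vanishing is \propositionref{prop:borderline-vanishing}, which is already proved; the argument is essentially a bookkeeping exercise about where the pre-$(k-1)$-Du Bois hypothesis overlaps with the Steenbrink range $i>n-p$.
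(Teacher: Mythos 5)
Your proof is correct and takes essentially the same route as the paper: the first assertion is an immediate instance of \propositionref{prop:borderline-vanishing}, and for the ``in particular'' you dispose of degree $p=n$ via Steenbrink's bound \eqref{eqn:Nakano} (equivalent to the paper's remark that $\DB_X^n\simeq\H^0\DB_X^n$), reduce degree $p=n-1$ to the single group $\H^1\DB_X^{n-1}$ handled by the first assertion, and note $\DB_X^p=0$ for $p>n$. The only cosmetic difference is that you make the range $p>n$ and the Steenbrink step for $p=n$ explicit, while the paper leaves them implicit.
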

\begin{proof}
The first part is an immediate consequence of Proposition \ref{prop:borderline-vanishing}. For the second part, note that we always have 
$\DB_X^n \simeq \H^{0}\DB_X^n$, while in the previous row, the only term that needs checking is $\H^{1}\DB_X^{n-1}$, which is covered
 by the first part.
\end{proof}

When $X$ has isolated singularities, this is \cite[Lemma 2.5]{FL-isolated}. When $X$ is a hypersurface, a slightly weaker result is contained in \cite[Theorem 1.4]{MOPW}.

\begin{remark}
Note that the result does not hold for $k = n$, as for any variety $X$ we have 
$\DB_X^n \simeq \H^{0}\DB_X^n \simeq \pi_* \omega_{\widetilde X}$, where 
$\pi \colon \widetilde X \to X$ is a resolution of singularities.
\end{remark}

\begin{example}
If $X$ is a pre-$0$-Du Bois surface, then $\H^i \DB_X^k = 0$ for all $k$ and all $i > 0$. Hence if $X$ is a surface with rational 
singularities, then 
$$\DB_X^k  \simeq \H^0 \DB_X^k\simeq \Omega_X^{[k]} \,\,\,\,\,\,{\rm for~all~}k.$$
The last isomorphism is a consequence of the main result of \cite{KS-extending}.
\end{example}

\begin{example}
Corollary \ref{cor:borderline-vanishing} cannot be improved, without further assumptions, by moving to the left in the Du Bois table. For instance, 
any $3$-fold $X$ with an isolated rational (hence Du Bois) hypersurface singularity that is not a double point, has $\H^{1}\DB_X^1 \neq 0$; see 
\cite[Theorem 2.2]{NS}. 
\end{example}

\subsection{Local cohomological dimension and depth}\label{scn:lcd}
Let $X$ be a complex variety. If $Y$ is a smooth variety containing $X$ (locally), the local cohomological dimension of $X$ in $Y$ is
$$\lcd (X, Y) := \max ~\{q ~|~ \H^q_X \O_Y \neq 0\},$$
where the sheaf in the parenthesis is the $q$-th local cohomology sheaf of $\O_Y$ along $X$. It is also known that if $r = \codim_Y X$, then $\H^q_X \O_Y = 0$ for $q < r$ and $\H^r_X \O_Y \neq 0$; 
see e.g. \cite{MP-LC} for a more details and references.

As in \cite{PSh}, we consider the \emph{local cohomological defect} ${\rm lcdef} (X)$ of $X$ as
$${\rm lcdef} (X) : = \lcd (X, Y) - \codim_Y X.$$
A reinterpretation of the characterization of local cohomological dimension in \cite[Theorem E]{MP-LC} can be stated as follows:

\begin{theorem}[{\cite[Corollary 12.6]{MP-LC}}]\label{thm:LCD}
Let $X$ be a subvariety of a smooth variety $Y$. For any integer $c$ we have 
$$\lcd(X, Y) \le c  \iff \sExt^{j + k + 1}_{\O_Y}(\DB_X^k,\omega_Y) = 0 \,\,\,\,{\rm for ~all~}j \ge c \,\,\,\,{\rm and}\,\,\,\, k \ge 0.$$
or equivalently 
$${\rm lcdef} (X) \le c  \iff  \sExt^{j + k + 1}_{\O_X}(\DB_X^k,\omega_X^{\bullet}) = 0 \,\,\,\,{\rm for ~all~}j \ge c - \dim X \,\,\,\,{\rm and}\,\,\,\, k \ge 0.$$
\end{theorem}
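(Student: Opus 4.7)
The plan is to observe that the first equivalence, between $\lcd(X,Y)\le c$ and vanishing of $\sExt^{j+k+1}_{\O_Y}(\DB_X^k,\omega_Y)$ for all $j\ge c$ and $k\ge 0$, is a direct restatement of \cite[Corollary~12.6]{MP-LC} (the main characterization of local cohomological dimension in terms of Ext of Du Bois complexes). Hence only the ``equivalently'' clause, phrased with $\omega_X^\bullet$ and ${\rm lcdef}(X)$, requires a derivation, and this is purely a duality/bookkeeping matter.

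For that step I would invoke Grothendieck duality for the closed embedding $i\colon X\hookrightarrow Y$, combined with the definitional identity ${\rm lcdef}(X)=\lcd(X,Y)-\codim_Y X$. Write $N=\dim Y$, $n=\dim X$, $r=N-n=\codim_Y X$. Since $\omega_X^\bullet=i^!\omega_Y[N]$ and $i_*$ is exact, the sheafified duality isomorphism reads
\[
i_*\,\derR\sHom_{\O_X}(\F,\omega_X^\bullet)\;\simeq\;\derR\sHom_{\O_Y}(i_*\F,\omega_Y)[N]
\]
for $\F\in\DD^b_{\rm coh}(X)$. Taking the $(j+k+1)$-st cohomology sheaf with $\F=\DB_X^k$ produces the degree-shifting formula
\[
\sExt^{j+k+1}_{\O_X}(\DB_X^k,\omega_X^\bullet)\;\simeq\;\sExt^{j+k+1+N}_{\O_Y}(\DB_X^k,\omega_Y).
\]
Setting $j_Y:=j+N$, so that the right-hand side equals $\sExt^{j_Y+k+1}_{\O_Y}(\DB_X^k,\omega_Y)$, the range $j\ge c-n$ translates into $j_Y\ge c-n+N=c+r$. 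By the first equivalence applied with threshold $c+r$, the vanishing of these Ext sheaves for all $k\ge 0$ and $j_Y\ge c+r$ amounts to $\lcd(X,Y)\le c+r$, which is precisely ${\rm lcdef}(X)\le c$. This yields the second equivalence.

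All genuine mathematical content is therefore packaged inside \cite[Corollary~12.6]{MP-LC}, whose proof rests on the non-trivial interplay between the Hodge filtration and Du Bois complexes, in particular on computing graded pieces of $\bR\Gamma_X\O_Y$ via the $\DB_X^k$. I do not expect any further obstacle in the present reformulation: once the citation is in place, everything reduces to a careful index-chase through the shift by $N$ introduced by $i^!$, and to checking that the two ranges $j\ge c$ and $j\ge c-\dim X$ correspond correctly under the substitution $c\leftrightarrow c+\codim_Y X$.
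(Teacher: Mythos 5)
Your proposal matches the paper's own treatment: the first equivalence is simply the citation of \cite[Corollary 12.6]{MP-LC}, and the paper derives the second from the first by exactly the Grothendieck duality argument you give, with the same degree shift by $\dim Y$ and the same translation $c \leftrightarrow c + \codim_Y X$. Your index-chase is correct, so there is nothing to add.
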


The second equivalence follows from the first thanks to Grothendieck duality for the inclusion $X \hookrightarrow Y$.  

We recall now that the notion of depth of a module has a natural extension to obejcts in the derived category. If $C$ is an element of the bounded derived category of finitely generated $R$-modules, where $(R, \frak{m})$ be a noetherian local ring endowed with a dualizing complex 
$\omega_R^\bullet$, then one can define
$$\depth(C) : = \min~  \{i ~|~\Ext^{-i}_R (C,\omega_R^{\bullet})\neq 0\}.$$
with the convention that the depth is $-\infty$ if $C = 0$. This notion is studied extensively in \cite{FY}, where it is shown to be equivalent
to other natural generalizations of the usual notion of depth. 
When $X$ is a variety and $\C$ is an element in $D^b_{\rm coh} (X)$, then we set 
$$\depth(\C) : = \min_{x \in {\rm Supp} (\C)}~ \depth(\C_x),$$
where the minimum is taken over the closed points in the support of $\C$. The first interpretation takes the form: 
\begin{equation}\label{eqn:depth}
\depth(\C) = \min~  \{i ~|~\sExt^{-i}_{\O_X} (\C,\omega_X^{\bullet})\neq 0\}.
\end{equation}
This is of course a standard interpretation of depth when $\C$ is a sheaf.

Using this, from Theorem \ref{thm:LCD} we conclude:

\begin{corollary}\label{cor:depth-DB-complex}
We have the identity 
$${\rm lcdef} (X) = \dim X -  \underset{k \ge 0}{\min}~ \{ \depth \DB_X^k + k \}.$$
\end{corollary}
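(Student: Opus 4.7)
The plan is to extract the identity directly from the second equivalence in Theorem~\ref{thm:LCD} by re-indexing the vanishing condition on local Ext sheaves in terms of the derived depths of the Du Bois complexes, using the interpretation of depth in \eqref{eqn:depth}. Since both ${\rm lcdef}(X) \le c$ and the inequality $c \ge \dim X - \min_k\{\depth\DB_X^k + k\}$ will be shown to cut out the same set of integers $c$, the equality follows at once.

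More concretely, I would fix $c \in \ZZ$ and unwind the right-hand side of the equivalence in Theorem~\ref{thm:LCD}. Setting $m = j + k + 1$, the condition ``$\sExt^{j+k+1}_{\O_X}(\DB_X^k,\omega_X^{\bullet}) = 0$ for all $j \ge c - \dim X$'' becomes
\[
\sExt^{m}_{\O_X}(\DB_X^k,\omega_X^{\bullet}) = 0 \quad \text{for all } m \ge c - \dim X + k + 1.
\]
By \eqref{eqn:depth}, the vanishing of all $\sExt^m_{\O_X}(\DB_X^k,\omega_X^{\bullet})$ for $m > -\depth\DB_X^k$ is precisely the defining property of $\depth\DB_X^k$; so the displayed condition is equivalent to
\[
-\depth\DB_X^k \le c - \dim X + k, \quad \text{i.e.,} \quad \dim X - \depth\DB_X^k - k \le c.
\]

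Requiring this for every $k \ge 0$ simultaneously is the same as
\[
\dim X - \underset{k \ge 0}{\min}\{\depth\DB_X^k + k\} \le c.
\]
Combining with Theorem~\ref{thm:LCD}, one obtains the equivalence
\[
{\rm lcdef}(X) \le c \iff \dim X - \underset{k \ge 0}{\min}\{\depth\DB_X^k + k\} \le c,
\]
which holds for every integer $c$, and so gives the claimed equality by taking the smallest such $c$ on both sides.

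The argument is essentially bookkeeping: the only subtlety is keeping track of the two index shifts (the $+k+1$ inside the Ext, and the reflection $i \leftrightarrow -m$ relating depth to Ext vanishing), so the ``main obstacle'' is just ensuring that the inequality of Theorem~\ref{thm:LCD} and the global minimum over $k$ are translated correctly. No hyperresolution or Hodge-theoretic input beyond Theorem~\ref{thm:LCD} is needed.
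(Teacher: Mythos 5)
Your proposal is correct and matches what the paper intends: the paper presents the corollary as an immediate consequence of Theorem~\ref{thm:LCD} (with no written-out proof), and your re-indexing via $m = j + k + 1$ together with the characterization \eqref{eqn:depth} is exactly the bookkeeping being left to the reader. The only minor point worth a sentence is that for $k > \dim X$ one has $\DB_X^k = 0$, so $\depth\DB_X^k = +\infty$ under the stated conventions and those indices drop out of the minimum, which keeps the formula well-defined; your argument is unaffected by this.
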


This shows in particular that ${\rm lcdef} (X)$ depends only on $X$, and not on the embedding, and that 
 $\dim X \ge {\rm lcdef} (X) \ge 0$.

\begin{example}[{\bf Varieties with ${\rm lcdef}(X)= 0$}]\label{def=0}
The condition ${\rm lcdef}(X)= 0$ is equivalent to $\lcd (X, Y) = \codim_Y X$ in any embedding, or equivalently to the non-vanishing of a single 
local cohomology sheaf $\H^r_X \O_Y$. This holds of course when $X$ is a local complete intersection. In addition, it is known to hold 
when $X$ has quotient singularities \cite[Corollary 11.22]{MP-LC}, for affine varieties with Stanley-Reisner coordinate algebras that are Cohen-Macaulay \cite[Corollary 11.26]{MP-LC}, for arbitrary Cohen-Macaulay surfaces and threefolds in \cite[Remark p.338-339]{Ogus} and 
\cite[Corollary 2.8]{DT} respectively, and for Cohen-Macaulay fourfolds whose local analytic Picard groups are torsion \cite[Theorem 1.3]{DT}.  

Note that, according to Corollary \ref{cor:depth-DB-complex}, for such varieties we have
$$ \depth \DB_X^k \ge n- k \,\,\,\,\,\,{\rm for ~all}\,\,\,\, k \ge 0.$$
\end{example}

\smallskip

We finish  by recalling a well-known vanishing result for Ext sheaves, for later use:

\begin{lemma}\label{lem:vanishing-of-Ext}
Let $\F$ be a coherent sheaf on $X$. Then 
\[\sExt^i_{\O_X}(\F, \omega_X^{\bullet})=0 \text{ \,\,{\rm for} \,\,} i< -\dim {\rm Supp}( \F).\]
\end{lemma}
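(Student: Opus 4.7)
The plan is to reduce the statement, via Grothendieck duality, to a standard fact about the cohomological amplitude of the dualizing complex on the scheme-theoretic support of $\F$.

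First I would let $i\colon Z \hookrightarrow X$ denote the scheme-theoretic support of $\F$, so that $\F = i_*\G$ for some coherent sheaf $\G$ on $Z$, and write $d = \dim Z = \dim \supp(\F)$. Applying Grothendieck duality along the closed immersion $i$ gives
\[
\sExt^i_{\O_X}(\F, \omega_X^{\bullet}) \simeq i_*\,\sExt^i_{\O_Z}(\G, \omega_Z^{\bullet}),
\]
where $\omega_Z^{\bullet} := i^!\omega_X^{\bullet}$ is the dualizing complex of $Z$. It therefore suffices to prove $\sExt^i_{\O_Z}(\G, \omega_Z^{\bullet}) = 0$ for $i < -d$.

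The key input is the standard fact that, with the normalization in which $\omega_Z^{\bullet} = \omega_Z[d]$ when $Z$ is smooth, the dualizing complex of a Noetherian scheme of dimension $d$ has $\H^q(\omega_Z^{\bullet}) = 0$ for $q < -d$. Granted this, the local-to-global Ext spectral sequence
\[
E_2^{p,q} = \sExt^p_{\O_Z}(\G, \H^q(\omega_Z^{\bullet})) \Longrightarrow \sExt^{p+q}_{\O_Z}(\G, \omega_Z^{\bullet})
\]
finishes the argument: $E_2^{p,q}$ vanishes when $p < 0$ because $\G$ is an honest coherent sheaf, and it vanishes when $q < -d$ by the amplitude statement, so $E_2^{p,q} = 0$ whenever $p+q < -d$.

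There is essentially no obstacle here: the only non-formal ingredient is the lower cohomological amplitude of $\omega_Z^{\bullet}$, which is classical and ultimately reduces to Grothendieck's local cohomology vanishing $H^i_{\m}(M) = 0$ for $i > \dim M$. In fact, one can bypass the spectral sequence entirely by localizing at a closed point $z \in Z$ and applying local duality, which identifies $\Ext^{-i}_{\O_{Z,z}}(\G_z, \omega_{\O_{Z,z}}^{\bullet})$ with the Matlis dual of $H^i_{\m}(\G_z)$; the latter vanishes for $i > \dim \G_z \le d$.
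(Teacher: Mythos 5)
The paper offers no proof of this lemma — it is stated as a ``well-known vanishing result for Ext sheaves,'' so there is no argument in the text to compare against. Your proof is correct. The reduction to the scheme-theoretic support $Z$ via Grothendieck duality for the closed immersion is valid, the lower cohomological amplitude bound $\H^q(\omega_Z^\bullet) = 0$ for $q < -\dim Z$ holds with the normalization you specify (inherited from $i^!\omega_X^\bullet$), and the local-to-global Ext spectral sequence closes the argument cleanly since $E_2^{p,q}$ lives in $p \ge 0$, $q \ge -d$. Your alternative via local duality is also correct, and in fact one can shorten it slightly by skipping the passage to $Z$ altogether: at a closed point $x \in \supp(\F)$ the localization $(\omega_X^\bullet)_x$ is a normalized dualizing complex for $\O_{X,x}$, so local duality identifies $\Ext^{-i}_{\O_{X,x}}(\F_x, (\omega_X^\bullet)_x)$ with the Matlis dual of $H^i_{\m}(\F_x)$, which vanishes for $i > \dim \F_x \le \dim\supp(\F)$ by Grothendieck's vanishing theorem; stalkwise vanishing of $\sExt^i_{\O_X}(\F,\omega_X^\bullet)$ for $i < -\dim\supp(\F)$ follows. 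Either route is fine, and both buy the same thing: the Grothendieck duality version makes the amplitude of the dualizing complex the visible input, while direct local duality reduces everything to the vanishing of local cohomology above the dimension of the stalk.
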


\section{Injectivity theorems}
In this chapter we address natural injectivity theorems for the cohomologies of the duals of the graded quotients of the Du Bois complex. The  first place where such a result appears is the paper \cite{KS-injectivity} by Kov\'acs-Schwede, who proved that for every variety $X$ the morphism 
\[\RHom_{\O_X} (\DB_X^0,\omega_X^{\bullet})\to \RHom_{\O_X} (\O_X,\omega_X^{\bullet})\]
obtained by dualizing the canonical morphism $\O_X \to \DB_X^0$,  is injective on cohomology; see \cite[Theorem 3.3]{KS-injectivity}. Using the Hodge filtration on local cohomology, a slightly stronger version of this fact was obtained in \cite[Theorem A]{MP-LC}, and then extended to higher Du Bois complexes in \cite[Theorem A]{MP-lci} in the case of local complete intersections:

\begin{theorem}[{\cite[Theorem A]{MP-lci}}]\label{thm:MP-injectivity}
If $X$ is local complete intersection, and $k$ is a nonnegative integer such that $X$ has $(k - 1)$-Du Bois singularities, then the morphism
\[\RHom_{\O_X} (\DB_X^k,\omega_X)\to \RHom_{\O_X} (\Omega_X^k,\omega_X)\]
in $\DD^b_{\rm coh} (X)$, obtained by dualizing the canonical
morphism $\Omega_X^k \to\DB_X^k$, is injective at the level of cohomology.
\end{theorem}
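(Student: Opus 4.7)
The plan is to exploit the relationship between the Du Bois complex and the Hodge filtration on local cohomology, which is especially well-behaved in the local complete intersection case. First I would reduce to the projective setting: since the statement asserts injectivity at the level of $\sExt$ sheaves, it is local on $X$, so one works in an affine chart and compactifies to a projective lci variety retaining the $(k-1)$-Du Bois property near the locus of interest. Projectivity is needed to import Hodge-theoretic tools, but the conclusion can then be read off stalk-by-stalk.

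Next, embed $X\subseteq Y$ in a smooth variety of codimension $r$. Because $X$ is lci, the local cohomology sheaf $\M:=\H^r_X\O_Y$ is the only nonzero local cohomology sheaf, and it carries Saito's Hodge filtration $F_\bullet \M$. By Grothendieck duality, $\RHom_{\O_X}(\DB_X^k,\omega_X)$ corresponds, up to appropriate shifts and twists, to a graded piece of $F_\bullet\M$, while $\RHom_{\O_X}(\Omega_X^k,\omega_X)$ corresponds to the analogous graded piece of a Koszul-type filtration $\widetilde F_\bullet\M$ built from K\"ahler differentials. The dualized map in the theorem is then the one induced by the inclusion $\widetilde F\subseteq F$, and the $(k-1)$-Du Bois hypothesis forces these two filtrations to coincide through degree $k-1$, so only their discrepancy in degree $k$ matters.

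With this translation in hand, the desired injectivity becomes a strictness statement for the Hodge filtration on $\M$. It should follow from the $E_1$-degeneration of the Hodge-to-de Rham spectral sequence for $\DB_X^\bullet$ on the projective compactification, together with Saito's general strictness results for Hodge modules. Concretely, the dualized map fits into a long exact sequence of $\sExt$ sheaves whose connecting homomorphism is controlled by the cone of $\widetilde F_k\hookrightarrow F_k$, and strictness forces this connecting map to vanish, yielding the injectivity on cohomology.

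The main obstacle is the identification step: one must carefully translate the dualized canonical map $\Omega_X^k\to\DB_X^k$ into the map on graded pieces of Hodge filtrations of $\M$, and verify that the $(k-1)$-Du Bois condition is precisely what makes the Koszul and Hodge filtrations agree through degree $k-1$. This is where the lci assumption is genuinely needed, since outside the lci case the comparison between the two filtrations fails, and a different approach (such as the hyperresolution/KS-style argument used elsewhere in the present paper for $\H^0\DB_X^k$) must be substituted.
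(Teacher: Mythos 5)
Be aware that the paper you are reading does not prove this statement at all: it is quoted verbatim as \cite[Theorem A]{MP-lci}, and the surrounding text explicitly contrasts that external proof with the route taken here. So there is no ``paper's own proof'' to match your proposal against.

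That said, your outline does reconstruct, in broad strokes, the strategy of \cite{MP-lci}: pass to the single nonvanishing local cohomology sheaf $\H^r_X\O_Y$ of an lci, endowed with Saito's Hodge filtration; use the comparison proved in \cite{MP-LC} between that Hodge filtration and the Ext (order) filtration, whose graded pieces compute $\sExt_{\O_Y}^{\bullet}(\Omega_X^k,\omega_Y)$ via a Koszul resolution; observe that the $(k-1)$-Du Bois hypothesis forces the two filtrations to agree through the relevant degree; and deduce the injectivity from strictness and $E_1$-degeneration. One small caution is the direction of the containment: in \cite{MP-LC} it is the Hodge filtration that sits \emph{inside} the Ext filtration, $F_\bullet \subseteq E_\bullet$, and equality up to level $k-1$ is exactly the $(k-1)$-Du Bois condition, so ``the map induced by the inclusion $\widetilde F\subseteq F$'' is stated backwards; this does not alter the shape of the argument, but it matters when writing out the long exact sequence of Ext sheaves. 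Also note that your reduction to a projective compactification requires more care than ``retaining the $(k-1)$-Du Bois property near the locus of interest,'' since the Hodge module and the filtration comparison are global in nature.

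Finally, to understand how this sits in the present paper: its own injectivity result, Theorem~\ref{thm:injectivity-H0DB}, deliberately avoids the local cohomology machinery precisely because that machinery is unavailable beyond the lci setting. Instead it uses the Kov\'acs--Schwede style $E_1$-degeneration of the Du Bois Hodge-to-de Rham spectral sequence, applied through the complexes $\Omega_{X,h}^{\le p}$ and $\DB_X^{\le p}$, and in that generality the correct replacement for $\Omega_X^k$ on the right-hand side is $\H^0\DB_X^k$. Your proposal is an accurate account of the \emph{cited} proof, but it is orthogonal to the method the paper itself develops and could not be adapted to the non-lci isolated case that is the paper's main contribution.
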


Here we prove an injectivity theorem for arbitrary isolated singularities, by going back to the basic Hodge-theoretic properties of Du Bois complexes, as in \cite{KS-injectivity}. Currently we do not have a sufficiently good understanding of the Hodge filtration on local cohomology, 
as a mixed Hodge module, beyond the local complete intersection case treated in \cite{MP-LC}. This is something highly desirable, that may clarify
the picture in the general non-isolated case and lead to a proof of Conjecture \ref{conj:injectivity-H0DB}.

\subsection{Proof of Theorem \ref{thm:injectivity-H0DB}}\label{scn:inj}
The statement of the injectivity theorem is local, hence we may assume first that $X$ is quasi-projective. Since the singular locus $S$ of $X$ is 
a finite set, we may choose a compactification $\bar{X}$  of $X$ such that the singular locus of $\bar{X}$ is still $S$, and prove the statement for $\bar{X}$.
Hence it suffices to assume that $X$ is projective to begin with. With this assumption, we prove a stronger statement:

\begin{theorem}\label{thm:injectivity-H0DB-gen}
    Let $X$ be a projective variety which is pre-$(k-1)$-Du Bois, and pre-$k$-Du Bois away from a finite set. Then the morphism
    \[\RHom_{\O_X}(\DB_X^k,\omega_X^{\bullet})\to \RHom_{\O_X}(\H^0\DB_X^k,\omega_X^{\bullet})\]
    in $\DD^b_{\rm coh} (X)$, obtained by dualizing the canonical map $\H^0\DB_X^k \to \DB_X^k$,
    is injective on cohomology. 
\end{theorem}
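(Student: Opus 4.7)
My plan proceeds in two steps: first, reduce the sheaf-level injectivity to a global hypercohomology surjectivity by exploiting the finite support of the relevant cone; second, prove the surjectivity using the $E_1$-degeneration of the Du Bois Hodge-to-de Rham spectral sequence on projective $X$.

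\emph{Step 1 (Reduction to a global hypercohomology statement).} Let $S$ denote the finite set of points where $X$ fails to be pre-$k$-Du Bois, and set $C := \Cone(\H^0\DB_X^k \to \DB_X^k)$. By the pre-$(k-1)$-Du Bois hypothesis, $C \in \DD^{\ge 1}_{\rm coh}(X)$ with $\H^i(C) = \H^i(\DB_X^k)$ for $i \ge 1$, and the remaining hypothesis forces $\supp(C) \subset S$. Dualizing the triangle $\H^0\DB_X^k \to \DB_X^k \to C \xto{+1}$ and applying Grothendieck local duality at each stalk, the sheaf-level injectivity claim becomes equivalent to surjectivity of $\HH^j_{\{x\}}(\H^0\DB_X^k) \sto \HH^j_{\{x\}}(\DB_X^k)$ for each $x \in S$ and every $j$. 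Since $R\Gamma_{\{x\}}(C) = C_x$ injects into $R\Gamma(X, C) = \bigoplus_{y \in S} C_y$ as the $x$-summand, the commuting square
\[
\begin{array}{ccc}
\HH^j_{\{x\}}(\DB_X^k) & \to & \H^j(C)_x \\
\downarrow & & \ito \\
\HH^j(X, \DB_X^k) & \to & \HH^j(X, C)
\end{array}
\]
shows local surjectivity follows from the global surjectivity
\[
(\dagger) \quad \HH^j(X, \H^0\DB_X^k) \sto \HH^j(X, \DB_X^k), \qquad \forall j.
\]
Equivalently, from the long exact sequence of $\H^0\DB_X^k \to \DB_X^k \to C$, $(\dagger)$ amounts to the connecting morphism $\delta_q \colon \HH^q(X, C) = \bigoplus_{x \in S} \H^q(\DB_X^k)_x \to \HH^{q+1}(X, \H^0\DB_X^k)$ being injective for every $q \ge 1$.

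\emph{Step 2 (Global surjectivity via $E_1$-degeneration).} For projective $X$, the Du Bois Hodge-to-de Rham spectral sequence $E_1^{p, q} = \HH^q(X, \DB_X^p) \Rightarrow H^{p+q}(X, \CC)$ degenerates at $E_1$, yielding $\HH^q(X, \DB_X^k) \cong \Gr^k_F H^{k+q}(X, \CC)$ and strict Hodge filtrations. To prove $\delta_q$ injective, I would argue along the lines of \cite{KS-injectivity}: the induced map $\HH^q(X, \DB_X^k) \to \bigoplus_{x \in S} \H^q(\DB_X^k)_x$ goes from the Hodge-graded piece $\Gr^k_F H^{k+q}(X, \CC)$ of the Deligne MHS on the proper variety $X$ into the local Hodge-graded pieces at the points of $S$. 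These local pieces are described, for isolated singularities, via Steenbrink-type results in terms of link or local cohomology (carrying compatible MHS), and the strict compatibility of Hodge filtrations with natural cohomology maps---the MHS refinement of the $E_1$-degeneration---should force this map to vanish, equivalently, $\delta_q$ to be injective.

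\emph{Main obstacle.} The principal technical challenge is making this final MHS-theoretic step rigorous without access to a refined Hodge filtration on local cohomology (which, in the LCI case, is the key ingredient of \cite{MP-LC}). This requires a careful identification of the MHS on $\H^q(\DB_X^k)_x$ at singular points and a direct verification of strict compatibility with the global Hodge filtration via the canonical cohomology maps. It is precisely this MHS machinery that is unavailable in the general non-LCI setting, which explains why Conjecture~\ref{conj:injectivity-H0DB} (the non-isolated extension) remains open.
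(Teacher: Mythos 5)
Your Step 1 reduction---that the sheaf-level injectivity follows from the global surjectivity $(\dagger)\colon \HH^j(X,\H^0\DB_X^k) \twoheadrightarrow \HH^j(X,\DB_X^k)$, using that $C$ has finite support---is correct, and is essentially the paper's final Lemma (the paper phrases it via the hypercohomology spectral sequence of $G$ and the identification $\HH^{i+1}(X,K)=H^0(X,\H^{i+1}K)$ for a zero-dimensional $K$, rather than via local duality, but the content is the same). You are also right that $E_1$-degeneration of the Du Bois Hodge-to-de Rham spectral sequence is the key input. The genuine gap is Step 2: the mechanism you propose for extracting $(\dagger)$ from $E_1$-degeneration is exactly the one the paper says is unavailable. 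You invoke strictness of the map into ``local Hodge-graded pieces at the points of $S$,'' i.e.\ a comparison with a Hodge filtration on local cohomology at the singular points; the paper states explicitly (both in the introduction and before Section~\ref{scn:inj}) that such a theory does not yet exist beyond the LCI case, and avoids it even in the isolated setting.

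What the paper actually does is prove $(\dagger)$ (Proposition~\ref{surjection}) with no finiteness assumption at all, by a purely global factorization. It introduces the complexes $\Omega_{X,h}^{\le p}=[\H^0\DB_X^0 \to \cdots \to \H^0\DB_X^p]$ (degrees $0,\dots,p$) together with the canonical map $\Omega_{X,h}^{\le p}\to \DB_X^{\le p}:=\DB_X^{\bullet}/F^{p+1}\DB_X^{\bullet}$ from \cite[Prop.~2.3]{SVV}. The composition $H^i(X,\CC)\to \HH^i(X,\Omega_{X,h}^{\le p})\to \HH^i(X,\DB_X^{\le p})$ is surjective by $E_1$-degeneration, hence so is the second map. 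One then forms the morphism of exact triangles between $(\H^0\DB_X^k[-k]\to\Omega_{X,h}^{\le k}\to\Omega_{X,h}^{\le k-1})$ and $(\DB_X^k[-k]\to\DB_X^{\le k}\to\DB_X^{\le k-1})$; the pre-$(k-1)$-Du Bois hypothesis makes the rightmost vertical map an isomorphism, and a diagram chase on the resulting morphism of long exact sequences yields $(\dagger)$. This factorization through the $h$-differential complexes---not a local MHS comparison---is the missing idea; once you have it, the rest of your outline goes through.
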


The key point in the proof is the following:

\begin{proposition}\label{surjection}
Let $X$ be a projective variety with pre-$(k-1)$-Du Bois singularities. Then for each $i$, the natural map
$$H^i (X, \H^0 \DB_X^k) \to \HH^i (X, \DB_X^k),$$
obtained by applying cohomology to $\H^0\DB_X^k \to \DB_X^k$, is surjective.
\end{proposition}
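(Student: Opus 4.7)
My plan is to extend the Kov\'acs--Schwede argument \cite{KS-injectivity} from $k = 0$ to general $k$, using the $E_1$-degeneration of the Hodge-to-de Rham spectral sequence
\[E_1^{p,q} = \HH^q(X, \DB_X^p) \implies H^{p+q}(X, \CC)\]
(which holds because $X$ is projective) together with the pre-$(k-1)$-Du Bois hypothesis. The degeneration is equivalent to the injectivity $\HH^j(X, F^p\DB_X^\bullet) \hookrightarrow H^j(X, \CC)$ with image the Hodge piece $F^p H^j(X, \CC)$. Applying this to the distinguished triangle $F^{k+1}\DB_X^\bullet \to F^k\DB_X^\bullet \to \DB_X^k[-k] \xrightarrow{+1}$, the boundary $\HH^i(X, \DB_X^k) \to \HH^{k+i+1}(X, F^{k+1}\DB_X^\bullet)$ must vanish, so the canonical map $\HH^{k+i}(X, F^k\DB_X^\bullet) \twoheadrightarrow \HH^i(X, \DB_X^k)$ is surjective. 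The task then reduces to showing that this surjection factors, on hypercohomology, through $H^i(X, \H^0\DB_X^k)$.

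To produce such a factorization, I would consider the derived pullback
\[\widetilde F^k := F^k\DB_X^\bullet \times^{\bR}_{\DB_X^k[-k]} \H^0\DB_X^k[-k],\]
sitting in distinguished triangles
\[F^{k+1}\DB_X^\bullet \to \widetilde F^k \to \H^0\DB_X^k[-k] \xrightarrow{+1}, \qquad \widetilde F^k \to F^k\DB_X^\bullet \to \tau^{\ge 1}\DB_X^k[-k] \xrightarrow{+1}.\]
By the pullback property, the projection $\widetilde F^k \to \DB_X^k[-k]$ factors through $\H^0\DB_X^k[-k]$, so it suffices to prove the surjectivity of $\HH^{k+i}(X, \widetilde F^k) \to \HH^{k+i}(X, F^k\DB_X^\bullet)$ --- equivalently, the vanishing of the composition $\HH^{k+i}(X, F^k\DB_X^\bullet) \to \HH^i(X, \DB_X^k) \to \HH^i(X, \tau^{\ge 1}\DB_X^k)$.

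This last vanishing is the main obstacle. The pre-$(k-1)$-Du Bois hypothesis intervenes through the observation that $\DB_X^p \simeq \H^0\DB_X^p$ is a sheaf for $p \le k-1$, so $F^0/F^k\DB_X^\bullet$ has cohomology concentrated in degrees $[0, k-1]$, whereas $\tau^{\ge 1}\DB_X^k[-k+1]$ has cohomology in degrees $[k, n-1]$. In particular the Hom group $\Hom_{D^b(X)}(F^0/F^k\DB_X^\bullet, \tau^{\ge 1}\DB_X^k[-k+1])$ vanishes by cohomological amplitude, which I expect to allow $\widetilde F^k$ to be promoted to a full modified filtration $\widetilde F^\bullet \to \DB_X^\bullet$ with $\H^0\DB_X^k[-k]$ as its $k$-th graded piece. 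One then argues that the associated modified Hodge-to-de Rham spectral sequence still degenerates at $E_1$ --- via careful comparison with the original through the map $\widetilde F^\bullet \to \DB_X^\bullet$ --- and reads off the required vanishing from this degeneration, in analogy with the classical Kov\'acs--Schwede argument for $k = 0$.
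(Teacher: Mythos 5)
Your cohomological amplitude observation is in fact the right key insight, and it leads to a valid and arguably cleaner argument than the paper's --- but the way you plan to deploy it (promoting $\widetilde F^k$ to a ``modified filtration'' and proving a new $E_1$-degeneration) is both unnecessary and not really justified, and your intermediate reduction is circular. Concretely: once you know $\HH^{k+i}(X, F^k\DB_X^\bullet)\to \HH^i(X,\DB_X^k)$ is surjective, the vanishing you label ``the main obstacle'' --- that the composition $\HH^{k+i}(X, F^k\DB_X^\bullet)\to \HH^i(X,\DB_X^k)\to \HH^i(X,\tau^{\ge 1}\DB_X^k)$ is zero --- is \emph{equivalent} to the map $\HH^i(X,\DB_X^k)\to \HH^i(X,\tau^{\ge 1}\DB_X^k)$ being zero, which by the long exact sequence of $\H^0\DB_X^k\to\DB_X^k\to\tau^{\ge 1}\DB_X^k$ is equivalent to the proposition itself. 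So that ``reduction'' makes no progress. What \emph{does} work is to strengthen the claim to the level of morphisms in $D^b(X,\CC)$: from the triangle $F^k\DB_X^\bullet\to \DB_X^\bullet\to \DB_X^\bullet/F^k\DB_X^\bullet\overset{+1}{\to}$, the long exact sequence for $\Hom(-,\tau^{\ge 1}\DB_X^k[-k])$ gives
\[\Hom(\DB_X^\bullet,\tau^{\ge 1}\DB_X^k[-k])\to\Hom(F^k\DB_X^\bullet,\tau^{\ge 1}\DB_X^k[-k])\to\Hom(\DB_X^\bullet/F^k\DB_X^\bullet,\tau^{\ge 1}\DB_X^k[-k+1]).\]
The left term equals $\HH^0(X,\tau^{\ge 1}\DB_X^k[-k])$, which vanishes since the argument sits in degrees $\ge k+1\ge 1$; the right term vanishes by exactly your amplitude count ($\DB_X^\bullet/F^k\in D^{\le k-1}$ by pre-$(k-1)$-Du Bois, while $\tau^{\ge 1}\DB_X^k[-k+1]\in D^{\ge k}$). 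Hence $\Hom(F^k\DB_X^\bullet,\tau^{\ge 1}\DB_X^k[-k])=0$, so the canonical map $F^k\DB_X^\bullet\to\DB_X^k[-k]$ already factors through $\H^0\DB_X^k[-k]$ in $D^b(X,\CC)$; composing with the $E_1$-degeneration surjection $\HH^{k+i}(X,F^k\DB_X^\bullet)\twoheadrightarrow\HH^i(X,\DB_X^k)$ finishes the proof, with no need for $\widetilde F^k$, a modified filtration, or any new degeneration statement. This route is genuinely different from the paper's: there the key input is the $h$-differential complex $\Omega_{X,h}^{\le k}$ with its map $\Omega_{X,h}^{\le k}\to \DB_X^{\le k}$ from \cite[Prop.~2.3]{SVV}, together with the identification $\Omega_{X,h}^{\le k-1}\simeq \DB_X^{\le k-1}$ under the pre-$(k-1)$-Du Bois hypothesis, followed by a diagram chase on long exact sequences. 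Your version replaces that structural input by a formal $t$-structure bound, which is more elementary; it just needs to be carried out as above rather than via the speculative ``modified filtration'' route.
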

\begin{proof}
For each $p \ge 0$, we denote 
$$\DB_X^{\le p} : = \DB_X^\bullet / F^{p + 1} \DB_X^\bullet.$$
So we have an exact triangle 
\begin{equation}\label{eq:tr1}
\DB^p_X[-p] \longrightarrow \DB_X^{\le p} \longrightarrow  \DB_X^{\le p-1} \overset{+1}\longrightarrow.
\end{equation}
We also denote by $\Omega_{X,h}^{\le p}$ the object in the derived category of differential complexes on $X$\footnote{The notation is motivated by the fact that 
	$\H^0\DB_X^k$ agrees with the $h$-differentials $\Omega_{X,h}^k$ studied in \cite{h-differential}.}, represented by the complex 
\[[\H^0\DB^0_X \xto{d} \H^0\DB^1_X \xto{d} \cdots \xto{d} \H^0\DB^p_X],\]
placed in cohomological degrees $0,\dots, p$. This is not to be confused with $\H^0 (\DB_X^{\le p})$. Here we have an exact triangle
\begin{equation}\label{eq:tr2}
\H^0 \DB^p_X[-p] \longrightarrow \Omega_{X,h}^{\le p} \longrightarrow \Omega_{X,h}^{\le p-1} \overset{+1}\longrightarrow.
\end{equation}

Note now that, as in \cite[Proposition 2.3]{SVV}, there exists a natural map 
$\Omega_{X,h}^{\le p}\to \DB_X^{\le p}$. When $X$ is projective, the $E_1$-degeneration of the Hodge-to-de Rham spectral sequence for the filtered de Rham complex of $X$ implies that the induced composition
    \[H^i(X, \CC) \to \HH^i(X, \Omega_{X,h}^{\le p}) \to \HH^i(X, \DB^{\le p}_X)\]
 is surjective for each $i$, hence so is the second map. 

Let's now consider the integer $k$ in the statement.
The map $\Omega_{X,h}^{\le k}\to \DB_X^{\le k}$, and its analogue for $k-1$, combined with the two exact triangles described above, gives rise 
to a morphism of exact triangles
\[\begin{tikzcd}
	\H^0\DB^k_X[-k] & \Omega_{X,h}^{\le k} & \Omega_{X,h}^{\le k-1}  \overset{+1}\longrightarrow \\
	\DB^k_X[-k] & \DB_X^{\le k} & \DB_X^{\le k-1}   \overset{+1}\longrightarrow.
         \arrow[from=1-1, to=1-2]
	\arrow[from=1-2, to=1-3]
	\arrow[from=2-1, to=2-2]
	\arrow[from=2-2, to=2-3]
	\arrow[from=1-1, to=2-1]
	\arrow[from=1-2, to=2-2]
	\arrow[from=1-3, to=2-3]
    \end{tikzcd}\]
Since $X$ is pre-$(k-1)$-Du Bois, the right-most vertical map is an isomorphism.  Passing to hypercohomology, we obtain a morphism of long exact sequences

 \[\begin{tikzcd}
	H^i (\H^0 \DB_X^k [-k]) & \HH^i (\Omega_{X, h}^{\le k})&  \HH^i (\Omega_{X, h}^{\le k-1})& H^{i+ 1} (\H^0 \DB_X^k [-k])  \\
	 \HH^i (\DB_X^k [-k]) &  \HH^i (\DB_X^{\le k}) &   \HH^i (\DB_X^{\le k-1})  &  \HH^{i+1} (\DB_X^k [-k]) 
	\arrow[from=1-1, to=1-2]
	\arrow[from=1-2, to=1-3]
	\arrow[from=1-3, to=1-4]
	\arrow[from=2-1, to=2-2]
	\arrow[from=2-2, to=2-3]
	\arrow[from=1-1, to=2-1]
	\arrow[from=1-2, to=2-2]
	\arrow["\simeq", from=1-3, to=2-3]
	\arrow[from=2-3, to=2-4]
	\arrow[from=1-4, to=2-4]
    \end{tikzcd}\]
Since the second vertical map is surjective for all $i$, basic homological algebra shows that so is the first. 
\end{proof}

We now consider the exact triangle
$$\H^0 \DB_X^k \to \DB_X^k \to  C \overset{+1}{\longrightarrow}.$$
By definition $X$ is pre-$k$-Du Bois away from a finite set of points if and only if $C$ is supported on a finite set. 
After dualizing, we obtain an exact triangle 
$$K \to  \RHom_{\O_X}(\DB_X^k,\omega_X^{\bullet}) \to \RHom_{\O_X}(\H^0\DB_X^k,\omega_X^{\bullet}) \overset{+1}{\longrightarrow},$$
where again $K$ is supported on a finite set.
Applying Grothendieck-Serre duality to the surjections in Proposition \ref{surjection}, we obtain that the induced morphisms 
$$\HH^i \big(X, \RHom_{\O_X}(\DB_X^k,\omega_X^{\bullet})\big)\to \HH^i \big(X, \RHom_{\O_X}(\H^0\DB_X^k,\omega_X^{\bullet})\big)$$
are injective for all integers $i$.

Theorem \ref{thm:injectivity-H0DB-gen} is then a consequence of the following general result:

\begin{lemma}
Let $X$ be a projective variety, and let
$$K \to F \to G \overset{+1}{\longrightarrow}$$ 
be an exact triangle in $\DD^b_{\rm coh} (X)$. Suppose that $K$ has zero-dimensional support, and that the induced maps on 
hypercohomology 
$$\HH^i (X, F) \to \HH^i (X, G)$$
are injective for all i. Then the induced maps on cohomology 
$$\H^i F \to \H^i G$$
are injective for all $i$.
\end{lemma}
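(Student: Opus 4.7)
The plan is to exploit the fact that a complex with zero-dimensional support has a hypercohomology spectral sequence that collapses trivially, so that the $(0,i)$ edge map is an isomorphism; the statement then follows from a naturality square together with a short stalk-level argument.

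First I translate both sides of the lemma into vanishing statements. From the long exact sequence of cohomology sheaves attached to $K\to F\to G\overset{+1}{\to}$,
\[\cdots\to \H^{i-1}G\to \H^i K\to \H^i F\to \H^i G\to\cdots,\]
the injectivity of $\H^i F\to \H^i G$ is equivalent to the vanishing of $\H^i K\to \H^i F$. In the same way, via the long exact sequence of hypercohomology, the hypothesis that $\HH^i(X,F)\to \HH^i(X,G)$ is injective for every $i$ is equivalent to the vanishing of $\HH^i(X,K)\to \HH^i(X,F)$ for every $i$. It therefore suffices to show that if the hypercohomology maps vanish, so do the cohomology sheaf maps.

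Next I build a commuting square of edge maps. Because $K$ has zero-dimensional support, every cohomology sheaf $\H^i K$ is coherent and supported on a finite set of closed points, so $H^p(X,\H^i K)=0$ for $p>0$. The spectral sequence $E_2^{p,q}=H^p(X,\H^q K)\Rightarrow \HH^{p+q}(X,K)$ thus collapses onto the column $p=0$, and the edge map becomes a natural isomorphism $\HH^i(X,K)\xrightarrow{\sim} H^0(X,\H^i K)$. Naturality of $\HH^i(X,-)\to H^0(X,\H^i(-))$ with respect to the morphism $K\to F$ gives the commuting diagram
\[
\begin{tikzcd}
\HH^i(X,K) & \HH^i(X,F) \\
H^0(X,\H^i K) & H^0(X,\H^i F)
\arrow[from=1-1, to=1-2]
\arrow[from=2-1, to=2-2]
\arrow["\cong"', from=1-1, to=2-1]
\arrow[from=1-2, to=2-2]
\end{tikzcd}
\]
whose bottom arrow is $H^0(X,-)$ applied to $\H^i K\to \H^i F$. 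Since the top arrow is zero and the left arrow is an isomorphism, the bottom arrow vanishes.

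Finally I upgrade the vanishing of the bottom arrow to the vanishing of the sheaf map itself. Decomposing $\H^i K$ as a direct sum of coherent summands supported at the distinct points of its support, it suffices to treat a single summand $\M$ concentrated at a point $p$. For such an $\M$ one has $\Gamma(X,\M)=\M_p$, and the composition
\[\M_p=\Gamma(X,\M)\longrightarrow \Gamma(X,\H^i F)\longrightarrow (\H^i F)_p\]
coincides with the stalk at $p$ of the sheaf morphism $\M\to \H^i F$. Vanishing of the first arrow therefore forces the stalk map, and hence $\M\to \H^i F$, to be zero, completing the proof. The only subtle step is the naturality used to assemble the commuting square, which is a standard functoriality property of the edge maps of Grothendieck's hypercohomology spectral sequence.
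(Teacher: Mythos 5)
Your proof is correct, and it is essentially a ``mirror image'' of the paper's argument: both exploit the collapse of the hypercohomology spectral sequence for $K$ (so that $\HH^i(X,K)\cong H^0(X,\H^i K)$), both use naturality of the edge map $\HH^i(X,-)\to H^0(X,\H^i(-))$, and both invoke the zero-dimensionality of the support to pass from a statement about global sections to a statement about the sheaf map itself. The difference is cosmetic but worth noting: the paper rotates the triangle and shows the connecting homomorphism $\H^i G\to\H^{i+1}K$ is \emph{surjective}, working with the morphism $G\to K[1]$ and needing to explicitly thread through the inclusion $E^{0,i}_\infty\hookrightarrow E^{0,i}_2$; you instead work directly with the morphism $K\to F$ and show $\H^i K\to\H^i F$ \emph{vanishes}, which streamlines the diagram chase slightly since the edge map already lands you in $H^0(X,\H^i F)$ without needing to name $E^{0,i}_\infty$. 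Your final step (reducing to a single skyscraper summand and comparing global sections with the stalk) is a clean way to make the last implication explicit, where the paper simply asserts it as immediate.
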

\begin{proof}
The injectivity on hypercohomology implies that for each $i$ we have short exact sequences:
$$0 \to \HH^i (X, F) \to \HH^i (X, G) \to \HH^{i +1} (X, K) \to 0.$$
Now the hypercohomology of $G$ is computed by a spectral sequence
$$E^{p,q}_2 = H^p(X, \H^q G) \implies \HH^{p+q}(X, G),$$
while the similar spectral sequence for $K$ shows that 
$$\HH^{i+1} (X, K) \simeq H^0 (X, \H^{i +1} K),$$
because of the assumption that $K$ is supported in dimension zero. Passing to the first associated graded term of the filtration on the total object 
in each of these two cases leads to a commutative diagram
   \[\begin{tikzcd}
        &\HH^i(X, G)\ar[d] \ar[r]& \HH^{i + 1} (X, K)\ar[d] \\
        &E^{0, i}_\infty \ar[r]& H^0 (X, \H^{i +1} K)
    \end{tikzcd}\]
and by the observations above, it follows that the bottom horizontal map is surjective. On the other hand, note that in fact this map has a factorization
$$E^{0, i}_\infty \hookrightarrow E^{0, i}_2 = H^0 (X, \H^i G) \overset{\varphi}{\longrightarrow}  H^0 (X, \H^{i +1}  K),$$
where $\varphi$ comes from the connecting homomorphism $H^i G \to \H^{i +1} K$ induced by the original triangle. Since the support of $\H^{i+1} K$ is zero-dimensional, it follows immediately that this connecting homomorphism is surjective for each $i$, which is equivalent to our assertion.
\end{proof}

\subsection{Injectivity results involving K\"ahler differentials}
Recall that for any $k \ge 0$ we have natural maps 
$$\Omega_X^k \longrightarrow \H^0\DB_X^k \longrightarrow \DB_X^k.$$
Since $\H^0\DB_X^k$ is known to be torsion-free, this arises in fact from a sequence of maps 
 \begin{equation}\label{eqn:composition-tf}
 \Omega_{X, {\rm tf}}^k \overset{\alpha}{\longrightarrow} \H^0\DB_X^k \overset{\beta}{\longrightarrow} \DB_X^k,
 \end{equation}
 where $\Omega_{X, {\rm tf}}^k: = \Omega_X^k / {\rm tors} \big( \Omega_X^k \big)$ is the canonical torsion-free quotient of the sheaf of K\"ahler differentials, and $\alpha$ is an inclusion which is an isomorphism away from $X_{\sing}$. Our main injectivity theorem has the following consequence:

\begin{corollary}\label{cor:injectivity-MP-gen}
Let $X$ be a variety with isolated pre-$(k-1)$-Du Bois singularities.  Then the dual 
        \[\RHom_{\O_X}(\DB_X^k,\omega_X^{\bullet})\to \RHom_{\O_X}(\Omega_{X, {\rm tf}}^k,\omega_X^{\bullet})\]
of the canonical morphism 
$(\ref{eqn:composition-tf})$  is injective on the $i$-th cohomology for all $i \neq 0$. Moreover, if  ${\rm lcdef} (X) \le \dim X - k - 1$,  then it is injective on all cohomologies.
\end{corollary}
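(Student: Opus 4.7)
The plan is to exploit the factorization of the canonical morphism $(\ref{eqn:composition-tf})$ as $\beta\circ\alpha$, which dualizes to
\[\RHom_{\O_X}(\DB_X^k,\omega_X^{\bullet})\xrightarrow{\beta^{\vee}}\RHom_{\O_X}(\H^0\DB_X^k,\omega_X^{\bullet})\xrightarrow{\alpha^{\vee}}\RHom_{\O_X}(\Omega_{X,{\rm tf}}^k,\omega_X^{\bullet}).\]
Theorem \ref{thm:injectivity-H0DB} delivers injectivity of $\beta^{\vee}$ on every cohomology sheaf, so the whole problem reduces to controlling $\alpha^{\vee}$ on $\sExt^{i}$.

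For $\alpha^{\vee}$ I would use the short exact sequence
\[0\to\Omega_{X,{\rm tf}}^{k}\xrightarrow{\alpha}\H^0\DB_X^k\to C\to 0,\]
in which $C$ is coherent with zero-dimensional support because $\alpha$ is an isomorphism off the finite set $X_{\sing}$. The decisive input is then $\sExt^{i}(C,\omega_X^{\bullet})=0$ for every $i\ne 0$: the vanishing for $i<0$ is immediate from Lemma \ref{lem:vanishing-of-Ext}, while for $i>0$ it follows from the standard fact that $\RHom_{\O_X}(M,\omega_X^{\bullet})$ is concentrated in degree zero whenever $M$ has finite length at every stalk (via Grothendieck duality for the finite closed embedding $j\colon\supp(C)\hookrightarrow X$, since $j^{!}\omega_X^{\bullet}$ is the dualizing complex of the zero-dimensional scheme $\supp(C)$, which lives in degree zero). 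Dualizing the short exact sequence and reading off the resulting long exact Ext sequence then forces $\alpha^{\vee}$ to be injective on $\sExt^{i}$ for every $i\ne 0$, which combined with Theorem \ref{thm:injectivity-H0DB} yields the first assertion.

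For the second assertion it remains to handle $i=0$. Under the hypothesis ${\rm lcdef}(X)\le n-k-1$ with $n=\dim X$, Corollary \ref{cor:depth-DB-complex} gives
\[\depth\DB_X^k\ge n-{\rm lcdef}(X)-k\ge 1,\]
and the depth interpretation $(\ref{eqn:depth})$ forces $\sExt^{0}(\DB_X^k,\omega_X^{\bullet})=0$. The degree-zero component of the map then has zero source, hence is trivially injective. The main technical point I expect to be the vanishing $\sExt^{i}(C,\omega_X^{\bullet})=0$ for $i>0$ in the middle paragraph: this is not one of the preliminaries recalled in Section \ref{scn:lcd}, but is a standard consequence of local/Matlis duality for finite-length modules and can be inserted without difficulty.
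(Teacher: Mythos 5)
Your proof follows essentially the same route as the paper: factor through $\H^0\DB_X^k$, apply Theorem~\ref{thm:injectivity-H0DB} to dispose of $\beta^\vee$, control $\alpha^\vee$ via the short exact sequence with finitely supported cokernel, and handle $i=0$ in the second assertion by the depth/Ext vanishing coming from the local cohomological defect bound. The one small point where you diverge is the vanishing $\sExt^i(C,\omega_X^\bullet)=0$ for $i>0$: you invoke Grothendieck duality for the closed embedding of the finite support, and remark that this is not among the preliminaries; in fact the paper gets it directly from the depth characterization $(\ref{eqn:depth})$, since a nonzero coherent sheaf with zero-dimensional support has depth $0$, so $\sExt^{-i}(C,\omega_X^\bullet)=0$ for $i<0$ — no separate Matlis/local duality argument is needed.
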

    \begin{proof}
Given Theorem \ref{thm:injectivity-H0DB},  for the first statement it is enough to have the injectivity on cohomology of the map
  \[\RHom_{\O_X}(\H^0\DB_X^k,\omega_X^{\bullet})\to \RHom_{\O_X}(\Omega_{X, {\rm tf}}^k,\omega_X^{\bullet})\]
  obtained by dualizing $\alpha$, in the range $i\neq 0$. But this is clear; if we complete $\alpha$ to a short exact sequence
    \[0\longrightarrow \Omega_{X, {\rm tf}}^k\longrightarrow \H^0\DB_X^k\longrightarrow Q\longrightarrow 0,\]
the cokernel $Q$ is supported on $X_{\sing}$, hence Lemma \ref{lem:vanishing-of-Ext} and ($\ref{eqn:depth}$)  imply $\sExt^i(Q,\omega_X^{\bullet})=0$ for $i\neq 0$. 
Dualizing the short exact sequence above then shows that in this range
        $$\sExt_{\O_X}^i(\H^0\DB_X^k,\omega_X^{\bullet})\to \sExt_{\O_X}^i(\Omega_{X, {\rm tf}}^k,\omega_X^{\bullet})$$
        is injective.
For the second statement, simply note that Theorem \ref{thm:LCD} implies that 
$$\sExt_{\O_X}^i(\DB_X^k,\omega_X^{\bullet})=0 \,\,\,\,\,\,{\rm for}\,\,\,\, i> {\rm lcdef} (X) -\dim X+k,$$
so that the assumption takes care of the remaining case $i = 0$.
            \end{proof}
      
\medskip            

We compare this with the previous injectivity statements obtained in the literature. 

\noindent
$\bullet$~When $k=0$,  we recall that the injectivity on cohomology of the canonical morphism 
\[\RHom_{\O_X}(\DB_X^0,\omega_X^{\bullet})\to \RHom(\O_{X},\omega_X^{\bullet})\]
holds in full generality thanks to \cite{KS-injectivity}.

\noindent
$\bullet$~When $k\ge 1$, for isolated singularities we obtain the extension of a
 result shown in the local complete intersection case in \cite{MP-lci}. Note first that by definition, when $X$ is $(k-1)$-Du Bois, rather than just pre-$(k-1)$-Du Bois, we have $\codim X_{\sing}\ge 2k-1$; when $X$ is a local complete intersection, this is not part of the definition, but holds automatically by \cite[Theorem F and Corollary 9.26]{MP-LC}. In our case this simply means $n - k - 1 \ge k - 2$, hence as a consequence of Corollary \ref{cor:injectivity-MP-gen} we first obtain:      
            
\begin{corollary}\label{cor:k-large}
When $X$ has isolated $(k-1)$-Du Bois singularities, and ${\rm lcdef  (X)} \le k - 2$, the map in Corollary \ref{cor:injectivity-MP-gen} is injective on all cohomologies. 
\end{corollary}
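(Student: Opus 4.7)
The plan is to reduce the statement directly to the second (stronger) assertion of Corollary \ref{cor:injectivity-MP-gen}, which already gives injectivity on all cohomologies as soon as ${\rm lcdef}(X) \le n - k - 1$, where $n = \dim X$. So the entire task reduces to verifying the numerical inequality $k - 2 \le n - k - 1$ under the hypotheses of the corollary.

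First I would invoke the codimension condition built into the definition of $(k-1)$-Du Bois singularities (as opposed to merely pre-$(k-1)$-Du Bois), namely
\[
\codim_X X_{\sing} \ge 2(k-1) + 1 = 2k - 1.
\]
In the isolated singularities case either $X$ is smooth, in which case the statement is vacuous, or $X_{\sing}$ is nonempty and zero-dimensional, so $\codim_X X_{\sing} = n$. The codimension bound therefore forces $n \ge 2k - 1$, equivalently $n - k - 1 \ge k - 2$.

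Consequently the hypothesis ${\rm lcdef}(X) \le k - 2$ implies ${\rm lcdef}(X) \le n - k - 1$, and one concludes by invoking the second half of Corollary \ref{cor:injectivity-MP-gen} verbatim. I do not anticipate any genuine obstacle here: this is essentially a bookkeeping corollary that trades the dimension-dependent bound ${\rm lcdef}(X) \le n - k - 1$ appearing in Corollary \ref{cor:injectivity-MP-gen} for the more intrinsic-looking bound ${\rm lcdef}(X) \le k - 2$, using exactly the codimension of the singular locus that comes for free from the full (not merely pre-) $(k-1)$-Du Bois hypothesis.
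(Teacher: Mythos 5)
Your proof is correct and matches the paper's argument exactly: both use the codimension bound $\codim_X X_{\sing} \ge 2k-1$ coming from the full $(k-1)$-Du Bois definition, combined with $\codim_X X_{\sing} = n$ in the isolated case, to deduce $n - k - 1 \ge k - 2$ and then apply the second clause of Corollary~\ref{cor:injectivity-MP-gen}.
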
            

We deduce the promised analogue of the result in \cite{MP-lci}:

\begin{corollary}\label{cor:injectivity-MP}
 Let $X$ be a variety with isolated $(k-1)$-Du Bois singularities. If ${\rm lcdef} (X) = 0$, then the map
        \[\RHom_{\O_X}(\DB_X^k,\omega_X^{\bullet})\to \RHom_{\O_X}(\Omega_{X, {\rm tf}}^k,\omega_X^{\bullet})\]
 is injective on cohomology. 
 \end{corollary}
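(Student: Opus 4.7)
The plan is to deduce this directly from Corollary \ref{cor:injectivity-MP-gen}. Since we assume ${\rm lcdef}(X) = 0$, the hypothesis ${\rm lcdef}(X) \le \dim X - k - 1$ appearing in its second clause reduces to the dimension bound $\dim X \ge k + 1$, which is the only thing that needs to be checked. The first clause already delivers injectivity on $\sExt^i$ for all $i \ne 0$ (under isolated pre-$(k-1)$-Du Bois, which is weaker than the hypothesis made here), so the entire task is to secure the case $i = 0$.

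For $k \ge 2$, the dimension bound is automatic: the full definition of $(k-1)$-Du Bois singularities (strengthening pre-$(k-1)$-Du Bois by the codimension requirement $\codim_X(X_{\sing}) \ge 2(k-1)+1 = 2k - 1$) together with the isolated singularities hypothesis forces $\dim X \ge 2k - 1 \ge k + 1$. Corollary \ref{cor:injectivity-MP-gen} then applies immediately, and this is in fact the content of Corollary \ref{cor:k-large}.

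The boundary cases are dispatched by separate means. For $k = 0$, we have $\Omega^0_{X,{\rm tf}} = \O_X$, and the assertion is the original Kov\'acs--Schwede injectivity theorem, which holds in full generality with no hypothesis on ${\rm lcdef}(X)$. For $k = 1$, the bound $\dim X \ge 2$ holds unless $X$ is a curve, in which case Steenbrink's vanishing forces $\DB_X^1 \simeq \H^0\DB_X^1$, so the first map in the factorization $\RHom_{\O_X}(\DB_X^1, \omega_X^\bullet) \to \RHom_{\O_X}(\H^0\DB_X^1, \omega_X^\bullet) \to \RHom_{\O_X}(\Omega^1_{X,{\rm tf}}, \omega_X^\bullet)$ is an isomorphism, and the remaining injectivity is extracted from the short exact sequence $0 \to \Omega^1_{X,{\rm tf}} \to \H^0\DB_X^1 \to Q \to 0$ with $Q$ of finite length.

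The only real obstacle is the minor bookkeeping in these small-$k$ regimes; the substantive content of the proof is a one-line invocation of Corollary \ref{cor:injectivity-MP-gen} once the dimension inequality $\dim X \ge k + 1$ is in hand.
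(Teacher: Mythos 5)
Your proposal is correct and follows essentially the same route as the paper: handle $k = 0$ by Kov\'acs--Schwede, $k \ge 2$ by the codimension bound from the full $(k-1)$-Du Bois definition (which is exactly Corollary \ref{cor:k-large}), and $k = 1$ with $\dim X \ge 2$ directly from the second clause of Corollary \ref{cor:injectivity-MP-gen}.

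For the boundary case $k = n = 1$ your argument diverges slightly from the paper and has a small gap. You reduce to showing that
$\RHom_{\O_X}(\H^0\DB_X^1,\omega_X^\bullet)\to\RHom_{\O_X}(\Omega^1_{X,\mathrm{tf}},\omega_X^\bullet)$
is injective on cohomology, and you extract injectivity on $\sExt^i$ for $i\neq 0$ from dualizing the sequence $0\to\Omega^1_{X,\mathrm{tf}}\to\H^0\DB_X^1\to Q\to 0$ with $Q$ of finite length --- but this leaves $i = 0$ unaddressed, and the finite-length cokernel argument gives nothing there, since $\sExt^0(Q,\omega_X^\bullet)$ need not vanish. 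The missing observation is that on a curve both $\H^0\DB_X^1$ and $\Omega^1_{X,\mathrm{tf}}$ are torsion-free and hence of depth $1$, so their duals are concentrated in degree $-1$; in particular $\sExt^0_{\O_X}(\H^0\DB_X^1,\omega_X^\bullet)=0$ and there is nothing to prove at $i = 0$. The paper makes exactly this depth observation explicit, noting that the only nontrivial cohomological degree is $i = -1$, and then invokes Corollary \ref{cor:injectivity-MP-gen} there. Adding one line on the depth of $\H^0\DB_X^1$ closes your argument.
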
   
    \begin{proof}
 Note that for $k = 0$ the result holds with no assumptions, so we may assume $k \ge 1$.     
 When $k \ge 2$, the result is a consequence of Corollary \ref{cor:k-large}. When $k =1$ and $n\ge 2$, we go back directly to the statement of 
 Corollary  \ref{cor:injectivity-MP-gen}. When $k=n=1$, since $\depth \H^0\DB_X^1=\depth \Omega^1_{X,{\rm tf}} = 1$, the map
 \[\RHom_{\O_X}(\DB_X^k,\omega_X^{\bullet})\to \RHom_{\O_X}(\Omega_{X,{\rm tf}}^k,\omega_X^{\bullet})\]
 is nontrivial on the $i$-th cohomology only when $i=-1$, in which case its injectivity follows from Corollary \ref{cor:injectivity-MP-gen}. 
    \end{proof}

This applies in particular when $X$ is a local complete intersection. When $\dim X\ge 2$ or $k\ge 2$, under our assumptions \cite[Corollary 3.1]{MV} implies that $\Omega_X^k$ is torsion-free, hence the injectivity on cohomology holds directly for      \[\RHom_{\O_X}(\DB_X^k,\omega_X^{\bullet})\to \RHom_{\O_X}(\Omega_X^k,\omega_X^{\bullet}).\footnote{In fact this can be easily seen to hold when $\dim X = k = 1$ as well.}\]
However, other interesting classes of varieties satisfy ${\rm lcdef} (X) = 0$ as well, and Corollary \ref{cor:injectivity-MP} also holds for those;  see Example \ref{def=0}.

\begin{remark}[{\bf Non-isolated singularities}]
Assuming Conjecture \ref{conj:injectivity-H0DB} (or under the hypothesis of Theorem \ref{thm:injectivity-H0DB-gen}), one has analogues of the results in this section  for projective varieties with possibly non-isolated singularities. The conclusion of Corollary \ref{cor:injectivity-MP-gen} becomes the fact that injectivity holds on $i$-th cohomology for:
\begin{enumerate}
\item $i > \lcdef (X) - \dim X + k$; in this case in fact $\sExt_{\O_X}^i(\DB_X^k,\omega_X^{\bullet})=0$.
\item $i < - \dim X_{\sing}$.
\end{enumerate}
\end{remark}


\section{Applications of injectivity}

\subsection{Vanishing of higher cohomology}\label{scn:vanishing}
In this section we prove Theorem \ref{thm:vanishing-isolated} and explain some related points.

We first state a general homological result about the vanishing of cohomologies of objects in the derived category of coherent sheaves, in terms of their depth.

\begin{proposition}\label{prop:derived}
Let $A^\bullet$ be an object in $\DD^b_{\rm coh} (X)$ such that 
\begin{enumerate}
\item $A^\bullet$ has cohomology in non-negative degrees.
\item The support of all $\H^i A^\bullet$ with $i > 0$ is contained in a closed subset of $X$ of dimension $s$.
\end{enumerate}
Then we have
\[\H^i A^\bullet  = 0 \quad \text{ {\rm for} } \quad 0<i< {\rm min} \{\depth \H^0 A^\bullet, \depth A^\bullet + 1\} - s -1.\]
\end{proposition}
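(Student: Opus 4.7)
The plan is to combine a depth bound for the truncation $B := \tau^{\ge 1} A^\bullet$ with the support hypothesis via Grothendieck biduality. First I would establish that $\depth B \ge m - 1$ using the natural triangle relating $A^\bullet$ to $\H^0 A^\bullet$ and $B$; then I would observe that $B$ and its dual $D := \RHom(B, \omega_X^\bullet)$ both have support in dimension $\le s$; and finally I would read off the vanishing from the hyperext spectral sequence that computes $\RHom(D, \omega_X^\bullet) \simeq B$.

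For the depth bound, applying $\RHom(-, \omega_X^\bullet)$ to the standard triangle
$$\H^0 A^\bullet \longrightarrow A^\bullet \longrightarrow B \overset{+1}{\longrightarrow}$$
and passing to cohomology yields the long exact sequence
$$\cdots \to \sExt^{j-1}(\H^0 A^\bullet, \omega_X^\bullet) \to \sExt^{j}(B, \omega_X^\bullet) \to \sExt^{j}(A^\bullet, \omega_X^\bullet) \to \cdots.$$
By the interpretation $(\ref{eqn:depth})$ of depth, both outer terms vanish as soon as $j \ge 2 - m$: the right one because $\depth A^\bullet \ge m-1$, the left one because $\depth \H^0 A^\bullet \ge m$. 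Hence $\sExt^{j}(B, \omega_X^\bullet) = 0$ for $j > -(m-1)$, which is exactly $\depth B \ge m - 1$.

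On the other hand, hypothesis (2), combined with Lemma \ref{lem:vanishing-of-Ext} and a standard induction on the truncations of $B$, gives $\sExt^j(B, \omega_X^\bullet) = 0$ for $j < -s$. The two bounds together imply that $D$ has cohomology concentrated in the range $[-s, -(m-1)]$ and is supported on $\supp B$, hence in dimension $\le s$. Grothendieck biduality then identifies $B \simeq \RHom(D, \omega_X^\bullet)$, and the hyperext spectral sequence
$$E_2^{p,q} = \sExt^p(\H^{-q} D, \omega_X^\bullet) \Longrightarrow \H^{p+q}(B)$$
has nonzero $E_2$-terms only when $q \in [m-1, s]$ (from the cohomological range of $D$) and $p \ge -s$ (by Lemma \ref{lem:vanishing-of-Ext} applied to each $\H^{-q} D$, whose support is inherited from that of $D$). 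Thus $p + q \ge m - s - 1$ for every nonzero term, so $\H^i B = 0$ for $i < m - s - 1$. Since $\H^i A^\bullet = \H^i B$ for $i \ge 1$, this is the claim.

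The argument is essentially formal, so I do not expect a substantive obstacle: the delicate point is the numerology that converts the shift ``$+1$'' in the definition of $m$ into precisely the right indexing on both sides of Grothendieck biduality, and checking that the vanishing ranges of the Ext sheaves fit together as claimed is the only step demanding attention.
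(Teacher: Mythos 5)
Your proposal is correct and follows essentially the same strategy as the paper's proof: the cone you call $B = \tau^{\ge 1}A^\bullet$ and the paper's $C^\bullet$ (cone of the dualized map $\RHom(A^\bullet,\omega_X^\bullet) \to \RHom(\H^0 A^\bullet, \omega_X^\bullet)$) are Grothendieck duals of each other up to a shift, so bounding $\depth B$ via the depth hypotheses and then reading off $\H^i B$ from the support condition through the hyperext spectral sequence is the same computation, merely phrased on the primal side rather than the dual side. The numerology matches, and the only extraneous step is the upper bound on the cohomological range of $D$, which is not actually needed to conclude.
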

\begin{proof}
The first assumption implies that there is a natural morphism $\H^0 A^\bullet  \to A^\bullet$, and taking its Grothendieck dual 
leads to
\begin{equation} \label{eqn:cone of non-DB locus}
\RHom_{\O_X}(A^\bullet,\omega_X^{\bullet}) \xto{\varphi} \RHom_{\O_X}(\H^0 A^\bullet,\omega_X^{\bullet}) \to C^{\bullet}\xto{+1}, 
\end{equation}
where $C^{\bullet}$ is the cone of the morphism $\varphi$. For each $q$, we obtain exact sequences
\[\sExt^q_{\O_X}(\H^0 A^\bullet,\omega_X^{\bullet})\to \H^q C^{\bullet}\to  \sExt^{q+1}_{\O_X}(A^\bullet,\omega_X^{\bullet}). \]
Using ($\ref{eqn:depth}$), the first term vanishes for $q>-\depth \H^0 A^\bullet$, and the third for  $q> -\depth A^\bullet - 1$. We conclude that 
\[\H^q C^{\bullet}=0\text{ for } q> - m,\]
where $m : = {\rm min} \{\depth \H^0 A^\bullet, \depth A^\bullet\}$.

Next, applying $\RHom_{\O_X}(-, \omega_X^{\bullet})$ to (\ref{eqn:cone of non-DB locus}), we obtain an exact triangle 
\[\RHom_{\O_X}(C^{\bullet},\omega_X^\bullet)\to \H^0 A^\bullet  \to A^\bullet \xto{+1}.\]
It follows that for $i>0$,
\[\H^i A^\bullet \simeq \sExt^{i+1}_{\O_X}(C^\bullet, \omega_X^{\bullet}).\]
Consider now the spectral sequence 
\[E^{p,q}_2 = \sExt^p_{\O_X}(\H^q C^{\bullet},\omega_X^\bullet) \implies \sExt^{p-q}_{\O_X}(C^{\bullet},\omega_X^\bullet).\]
We have:
\begin{itemize}
    \item $\H^qC^{\bullet}=0$ for $q> - m$, as noted above;
    \item $\sExt^p_{\O_X}(\H^q C^{\bullet},\omega_X^{\bullet})=0$ for $p<-\dim \supp \H^q C^{\bullet}$, by Lemma \ref{lem:vanishing-of-Ext}. In particular, this holds for $p<-s$, since by definition $C^{\bullet}$ is supported on the locus where $\H^0 A^\bullet$ and $A^\bullet$ are not 
quasi-isomorphic, which has dimension $s$.
\end{itemize}
Combining these facts, we see that $\sExt^i_{\O_X}(C^{\bullet},\omega_X^{\bullet})=0$ for $i< m - s$. Thus
\[\H^i A^\bullet = 0 \,\,\,\,\,\,{\rm for} \,\,\,\,0<i< m - s -1.\]
\end{proof}

\begin{proof}[Proof of Proposition \ref{prop:vanishing-general}]
We simply take $A^\bullet = \DB_X^k$ in Proposition \ref{prop:derived}. Its higher cohomologies are supported on the 
non-pre-$k$-Du Bois locus of $X$, so we obtain
\[\H^i\DB^k_X = 0 \quad \text{ {\rm for} } \quad 0<i< m_k^\prime - s -1,\]
 where 
 $$m_k^\prime := {\rm min} \{\depth\H^0\DB^k_X, \depth \DB^k_X+ 1\}.$$
The result then follows from the characterization of the local cohomological defect in Corollary \ref{cor:depth-DB-complex}.
\end{proof}

\begin{proof}[Proof of Theorem \ref{thm:vanishing-isolated}]
To deduce the stronger vanishing statement in the case of isolated singularities, 
the key new ingredient is that, thanks to Theorem \ref{thm:injectivity-H0DB}, with the notation in (1) the long exact sequence on cohomology
associated to the triangle ($\ref{eqn:cone of non-DB locus}$) breaks into short exact sequences:
\[0\to \sExt^i_{\O_X}(A^\bullet,\omega_X^{\bullet})\to \sExt^i_{\O_X}(\H^0 A^\bullet ,\omega_X^{\bullet})\to \H^i C^{\bullet}\to 0 \]
for all $i$. 

The inclusion of Ext sheaves gives 
$$\depth \DB_X^k \ge  \depth \H^0\DB_X^k$$
on any variety with pre-$(k-1)$-Du Bois singularities. Hence in the proof of Proposition \ref{prop:vanishing-general} we 
have $m_k^\prime =  \depth\H^0\DB^k_X$, and $s = 0$, which gives the desired result.
\end{proof}

The exact same argument, using the injectivity theorem of Kov\'acs-Schwede \cite{KS-injectivity} in place of Theorem \ref{thm:injectivity-H0DB}, proves Corollary \ref{cor:CM}.

\begin{remark}\label{rmk:vanishing-log-poles}
	Let $X$ be a variety with isolated singular locus $S$, and let $f \colon \widetilde{X}\to X$ be a resolution of singularities with simple normal crossings exceptional divisor $E=f^{-1}(S)_{\rm red}$. The vanishing result for the Du Bois complexes of $X$ in Theorem \ref{thm:vanishing-isolated} can be reformulated as saying that
	\[R^if_*\Omega^k_{\widetilde{X}}(\log E)(-E) = 0\quad \text{ {\rm for} } \quad 0<i<\depth\H^0\DB^k_X -1,\]
	assuming that $X$ has pre-$(k-1)$-Du Bois singularities. 
	Indeed, by \cite[Proposition 3.3]{Steenbrink-vanishing} there is an exact triangle
	\[\bR f_*\Omega^k_{\widetilde{X}}(\log E)(-E)\to \DB_X^k\to \Omega_S^k\xto{+1}.\]
	Thus, for $k>0$, it is immediate that
	\[\H^i \DB_X^k \simeq R^{i}f_*\Omega^k_{\widetilde{X}}(\log E)(-E)\]
	for all $i$. 
	When $k=0$ and $i > 0$, this isomorphism still holds: the map $\H^0\DB_X^0\to \O_S$ is surjective, since the composition $\O_X \to \H^0 \DB_X^0 \to \O_S$ is the natural surjection. 
\end{remark}

\begin{example}[{\bf The LCI case}]\label{ex:vanishing-lci}
If $X$ is a local complete intersection with $\dim X_{\rm sing} = s$, it is shown in \cite[Corollary 13.9]{MP-LC}, using the Hodge filtration on local cohomology, that 
\begin{equation}\label{eqn:lci}
\H^i\DB_X^k = 0 \quad\text{ for }\quad 0<i< n-s-k-1.
\end{equation}
We now explain that this fact is a special case of our results here. The statement is vacuous when $\codim X_{\rm sing} = n - s \le k+2$, hence we may assume $n -s \ge k +3$, which in particular implies that $X$ is normal and moreover, by \cite[Corollary 3.1]{MV},  that the sheaf of K\"ahler differentials $\Omega_X^k$ is reflexive.
This in turn implies that $\H^0\DB_X^k = \Omega_X^{[k]}=\Omega_X^k$. By Lemma 1.8 of \cite{Greuel}, within this range we then have 
    \[\depth \H^0\DB_X^k=\depth \Omega_X^k\ge n -k.\]
Since ${\rm lcdef}(X) = 0$, ($\ref{eqn:lci}$) then follows from Proposition \ref{prop:vanishing-general}.
\end{example}

\noindent
{\bf A criterion for the Du Bois condition.}
A simple but intriguing consequence of vanishing in the form of Corollary \ref{cor:CM} is the following:

\begin{corollary}\label{cor:DB-CM-criterion}
Let $X$ be a projective seminormal Cohen-Macaulay variety of dimension $n$, with isolated singularities, or more generally 
Du Bois away from a finite set of points.
If $H^n (X, \O_X) = 0$, then $X$ is Du Bois.

More precisely, we have $h^n (X, \O_X) \ge h^n (X, \DB^0_X)$, and $X$ is Du Bois $\iff$ $h^n (X, \O_X) = h^n (X, \DB^0_X)$
$\iff$ the natural map $H^n (X, \O_X) \to \HH^n (X, \DB_X^0)$  is injective (hence an isomorphism).
\end{corollary}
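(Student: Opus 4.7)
My plan is to work with the exact triangle
\[
\O_X \to \DB_X^0 \to C \xrightarrow{+1}
\]
in $\DD^b_{\rm coh}(X)$; by definition $X$ is Du Bois if and only if $C = 0$. The first step is to verify that $C$ has cohomology concentrated in the single degree $n-1$, with zero-dimensional support. Seminormality gives $\H^0 \DB_X^0 = \O_X$, hence $\H^0 C = 0$, while for $i \ge 1$ the sheaves $\H^i C = \H^i \DB_X^0$ are supported on the finite non-Du Bois locus. Corollary \ref{cor:CM} (with $s = 0$ and $\depth \O_X = n$, since $X$ is Cohen-Macaulay of dimension $n$) gives $\H^i \DB_X^0 = 0$ for $0 < i < n - 1$, while Proposition \ref{prop:borderline-vanishing} kills $\H^n \DB_X^0$. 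Thus $C \simeq \H^{n-1} C[-(n-1)]$, and since $\H^{n-1} C$ has zero-dimensional support we get $\HH^{n-1}(X, C) = H^0(X, \H^{n-1} C)$ and $\HH^i(X, C) = 0$ for $i \ne n-1$.

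Feeding this into the hypercohomology long exact sequence of the triangle collapses it into isomorphisms $H^i(X, \O_X) \xrightarrow{\sim} \HH^i(X, \DB_X^0)$ for $i \le n-2$, together with the single exact sequence
\[
0 \to H^{n-1}(X, \O_X) \xrightarrow{\alpha} \HH^{n-1}(X, \DB_X^0) \xrightarrow{\delta} H^0(X, \H^{n-1} C) \to H^n(X, \O_X) \xrightarrow{\gamma} \HH^n(X, \DB_X^0) \to 0.
\]
The essential input is Proposition \ref{surjection} applied with $k = 0$, whose hypothesis (pre-$(-1)$-Du Bois) is vacuous: it yields that $\alpha$ and $\gamma$ are surjective. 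Surjectivity of $\alpha$ forces $\delta = 0$, so the sequence reduces to a short exact sequence $0 \to H^0(X, \H^{n-1} C) \to H^n(X, \O_X) \xrightarrow{\gamma} \HH^n(X, \DB_X^0) \to 0$. This already gives the inequality $h^n(X, \O_X) \ge h^n(X, \DB_X^0)$, with equality equivalent to $\gamma$ being an isomorphism and to $H^0(X, \H^{n-1} C) = 0$.

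Finally, since $\H^{n-1} C$ is a coherent sheaf with zero-dimensional support, vanishing of $H^0$ is equivalent to vanishing of the sheaf itself, which (together with the already-established vanishing of all other cohomology sheaves of $C$) is equivalent to $C = 0$, i.e.\ to $X$ being Du Bois; the case $H^n(X, \O_X) = 0$ then follows from the inequality. I do not expect any serious obstacle; the only bookkeeping to be careful about is that the three cited auxiliary results (Corollary \ref{cor:CM}, Proposition \ref{prop:borderline-vanishing}, and Proposition \ref{surjection}) all apply here without a pre-$(k-1)$-Du Bois hypothesis, since $k = 0$ makes that condition vacuous.
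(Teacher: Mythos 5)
Your proof is correct and follows essentially the same route as the paper: same cone $C$, same vanishing inputs (Corollary \ref{cor:CM} and Proposition \ref{prop:borderline-vanishing}) to concentrate $C$ in degree $n-1$ with finite support, and the same appeal to the $E_1$-degeneration (via Proposition \ref{surjection}) to extract the short exact sequence. If anything you are a bit more explicit than the paper in justifying the left-exactness of the resulting short exact sequence (via surjectivity of $\alpha$ forcing $\delta = 0$), whereas the paper only spells out the surjectivity of $\gamma$, but the underlying argument is identical.
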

\begin{proof}
We consider the cone 
$$\O_X \to \DB_X^0 \to C^\bullet \overset{+1}{\longrightarrow}.$$
Note that $C^\bullet$ is supported on a finite set.
We clearly  have $\H^i C^\bullet = 0$ for $i \le 0$ (since the seminormality condition is equivalent to $\O_X \simeq \H^0 \Omega_X^0$), while $\H^i C^\bullet \simeq \H^i \DB_X^0$ for $i \ge 1$. 
In particular, by Proposition \ref{prop:borderline-vanishing} we have $\H^i C^\bullet = 0$ for $i > n -1$. Moreover, since $X$ is Cohen-Macaulay, by 
Corollary \ref{cor:CM} we have $\H^i C^\bullet = 0$ for $i< n-1$.

Note now that we have a short exact sequence 
$$0 \to \HH^{n-1} (X, C^\bullet) \to H^n (X, \O_X) \to \HH^n (X, \DB_X^0) \to 0.$$
The last map is surjective thanks to the degeneration of the Hodge-to-de Rham spectral sequence, as in Section \ref{scn:inj}, as it sits in the surjective composition
$$H^n (X, \CC) \to H^n (X, \O_X) \to  \HH^n (X, \DB_X^0).$$

The hypercohomology group $\HH^{n-1} (X, C^\bullet)$ is computed by a spectral sequence whose $E_2$-terms are 
$$E^{p, q}_2 = H^p (X, \H^q C^\bullet), \,\,\,\,\,\,{\rm with}\,\,\,\,p +q = n-1.$$
Since $\H^i C^\bullet = 0$ for $i \neq n-1$, this gives
$$H^n (X, \O_X) \simeq \HH^n (X, \DB_X^0) \iff H^0 (X, \H^{n-1} C^\bullet) = 0.$$
As the support of $C^\bullet$ is finite, this last condition is equivalent to $\H^{n-1} C^\bullet = 0$, hence to $C^\bullet = 0$, i.e. to $X$ being Du Bois. \end{proof}

For instance this applies to any low degree normal complete intersection with isolated singularities in $\PP^n$.

\begin{remark}
The criterion above has a (rather technical) analogue for higher $k$: using Theorem \ref{thm:vanishing-isolated}, the same proof shows that if $X$ is pre-$(k-1)$-Du Bois, and 
pre-$k$-Du Bois away from a finite set, and if $\depth \H^0 \DB_X^k = n-k$, then the cohomology vanishing $H^{n-k} (X,  \H^0 \DB_X^k) = 0$ 
implies that $X$ is pre-$k$-Du Bois.
\end{remark}

\noindent
{\bf Examples of non-vanishing.}
In \cite[Question 13.10]{MP-LC} it is asked whether the vanishing result for local complete intersections in Example \ref{ex:vanishing-lci}, namely 
\[\H^i\DB_X^k = 0 \quad\text{ for }\quad 0<i< n-s-k-1\]
with $s=\codim X_{\sing}$, continues to hold when $X$ is arbitrary, or at least Cohen-Macaulay.

The study of Du Bois complexes of cones in \cite{SVV} and \cite{PSh} provides simple counterexamples. 

\begin{example}	
First a very simple example that is not Cohen-Macaulay. Let $X =C( Y,L)$ be the abstract affine cone over a smooth projective threefold $X$ endowed with an ample line bundle $L$ such that $H^1(Y,L)\neq 0$.\footnote{For example, we can take $X= C\times C\times C$ for some smooth projective curve $C$ of genus $g\ge 2$, and $L = \O_C(p)\boxtimes \O_C(p)\boxtimes \O_C (p)$ for some $p \in C$.}
Then according to \cite[Proposition 7.2]{SVV}, we have $\H^1\DB_X^0\neq 0$ and $\H^1\DB_X^1\neq 0$.
\end{example}

\begin{example}
In this example $X$ has rational, hence in particular Cohen-Macaulay, singularities. Let $Y$ be a smooth Fano threefold for which 
	\[H^1(Y, \Omega_Y^2\otimes L)\neq 0,\]
	where $L=\omega_Y^{-1}$. The existence of such $Y$ is shown in \cite[Section 2]{Totaro}.
	
Let now $X =C( Y\times Y, L\boxtimes L)$ be the abstract cone over $Y\times Y$ associated to the ample line bundle $L\times L$. Since 
$Y\times Y$ is still a Fano variety, we have 
$$H^i (Y \times Y, (L \boxtimes L)^m) = 0 \,\,\,\,\,\,{\rm for~all} \,\,\,\,i > 0, m\ge 0,$$
hence $X$ has rational singularities; see e.g. \cite[Remark 7.8]{SVV}. Furthermore, we have 
	\[H^1(Y\times Y, \Omega_{Y\times Y}^2\otimes (L\boxtimes L))\neq 0,\]
	as it contains $H^1(Y, \Omega_Y^2\otimes L)\neq 0$ as a direct summand. Using \cite[Proposition 7.2]{SVV}, it follows that
	\[\H^1\DB_X^2\neq 0.\]
(Note that for a counterexample we needed $\H^i \DB_X^2\neq 0$ for some $i \le 3$.)
\end{example}

In view of these examples, and of the results of this paper, in retrospect the question on vanishing in \cite{MP-LC} should have been more restrictive. Namely, is it true that for $X$ arbitrary we have
\begin{equation}\label{eqn:modified}
\H^i\DB_X^k = 0 \quad\text{ for }\quad 0<i< n-k-1- {\rm lcdef}(X) - s?
\end{equation}
It turns out however that even this statement is false, again already for cones over special smooth varieties. In this case (or whenever the singularities are isolated) thanks to Corollary \ref{cor:depth-DB-complex} the question becomes whether
\[\H^i\DB_X^k = 0 \,\,\,\,\,\, \text{for} \,\,\,\, 0 <  i< \min_{p\ge 0}\{\depth \DB_X^p + p\}-k-1.\]

\begin{example}
Let $Y$ be a smooth projective variety with $\dim Y \ge 3$ and $H^1 (Y, \O_Y) = 0$, endowed with a very ample line bundle $L$ such that 
$H^1(X, L) \neq 0$. We will show the existence of such a variety below; for now we draw some conclusions about the abstract cone $X = C(Y, L)$.

We claim that the modified question in ($\ref{eqn:modified}$) has a negative answer when $k=0$ and $i=1$. 
First, by \cite[Proposition 7.2]{SVV}, the hypothesis $H^1 (Y,L) \neq 0$ implies that  $\H^1\DB_X^0\neq 0$. On the other hand, the computation of the depth of Du Bois complexes of cones is addressed in \cite[Theorem 3.1(2)]{PSh}. In our example it implies that:
\begin{itemize}
\item $\depth \DB_X^2>0$
\smallskip
\item  $\depth \DB_X^1>1 \iff$  $\begin{cases} 
H^1(Y, \Omega_Y^1\otimes L^m)=0 \text{ for } m\le -1 \\
	 	 H^1(Y, \O_Y)=H^0(Y, \Omega_Y^1)=0 \\
	 	 H^0(Y, \O_Y)\overset{\cup c_1(L)}\longrightarrow H^1(Y, \Omega_Y^1) \text{ is injective}
	 \end{cases}$
\smallskip	 
\item	 $\depth \DB_X^0>2 \iff $  $\begin{cases}
	 	 H^0(Y, L^m)=0 \text{ for } m\le -1 \\
	 	H^1(Y, L^m)=0 \text{ for } m\le 0
	 \end{cases}$
\end{itemize}	 
	All of these conditions, other than $H^1(Y, L)\neq 0$ and $H^1(Y, \O_Y)=0$ provided by the hypothesis, are satisfied by Kodaira-Nakano vanishing and Hard Lefschetz.  It follows that
	\[\min_{p\ge 0}\{\depth \DB_Z^p + p \}-1 >1,\]
	while $\H^1\DB_Z^0\neq 0$.

Here is an example of a threefold $Y$ satisfying the required properties: let $Y\subset \PP^2\times \PP^1\times \PP^1$ be a general hypersurface 
in the linear system $|\O_{\PP^2}(-d)\boxtimes \O_{\PP^1}(-1) \boxtimes \O_{\PP^1}(-1)|$, with $d\gg 0$, and let $L=\O_Y (1,1,1)$. 
Then chasing cohomology through the short exact sequences
	\[0\to \O_{\PP^2\times \PP^1\times \PP^1}(-d,-1,-1)\to \O_{\PP^2\times \PP^1\times \PP^1}\to \O_Y\to 0\]
	\[0\to \O_{\PP^2\times \PP^1\times \PP^1}(-d+1,0,0)\to \O_{\PP^2\times \PP^1\times \PP^1}(1,1,1)\to L\to 0\]
	and using the  K\"unneth formula easily shows that $H^1 (Y, \O_Y) = 0$ and $H^1 (Y, L) \neq 0$.
	
Examples of any dimension can be obtaines as follows: take products $Y \times Z$ and line bundles $L \boxtimes M$, where $Y$ is the variety above, and $Z$ is such that $H^1(Z, \O_Z) = 0$ and has an ample line bundle $M$ with $H^0 (Z, M) \neq 0$.

\medskip

As a general conclusion to this section, the answer to the question regarding which higher cohomologies $\H^i \DB_X^k$ vanish is dictated by the depth of $\H^0 \DB_X^k$, which can sometimes be smaller than $n - k - {\rm lcdef (X)}$.
\end{example}

\subsection{$k$-rational implies $k$-Du Bois}\label{scn:krat-kDB}
Theorem \ref{thm:injectivity-H0DB} leads to a very quick alternative proof of the fact that normal, pre-$k$-rational isolated singularities are pre-$k$-Du Bois, obtained (even for non-isolated singularities) in \cite[Theorem B]{SVV}. As explained in \cite[Corollary 5.8]{SVV}, it then follows easily that $k$-rational implies $k$-Du Bois, in the same setting. Recall that when $k = 0$ this implication was studied in  \cite[Proposition 3.7]{Steenbrink-isolated-rational-implies-DB} for isolated singularities, and in \cite{Kovacs-rational-implies-DB}, \cite{Saito-MHC} in general. Later, \cite[Theorem 1.6]{FL}  and \cite[Theorem B]{MP-lci}  proved that $k$-rational implies $k$-Du Bois for local complete intersections.

If  $X$ is normal and pre-$k$-rational, then it has rational singularities.  Therefore, using the main result of \cite{KS-extending}, one has that the composition
 \[\H^0\DB_X^k \to  \DB_X^k  \to  \DD (\DB_X^{n-k})\]
 is a quasi-isomorphism; see \cite[Remark 2.5]{SVV} for details. Here we denote 
 $$\DD_X (\DB_X^{n-k}) := \RHom_{\O_X}(\DB_X^{n-k},\omega_X^{\bullet}[-n]).$$

Dualizing, this gives that the 
composition
\[\RHom_{\O_X}( \DD (\DB_X^{n-k}),\omega_X^{\bullet})\longrightarrow \RHom_{\O_X}(\DB_X^k,\omega_X^{\bullet})\overset{\varphi}{\longrightarrow} \RHom(\H^0\DB^k_{X},\omega_X^{\bullet})\]
is a quasi-isomorphism as well, which in turn implies that $\varphi$ is surjective on cohomology. By induction we may assume however that $X$ has 
pre-$(k-1)$-Du Bois singularities, hence it is also injective on cohomology by Theorem \ref{thm:injectivity-H0DB}. It follows that $\varphi$ 
is a quasi-isomorphism, and dualizing again we obtain that $X$ is pre-$k$-Du Bois. Conjecture \ref{conj:injectivity-H0DB} would of course make the same proof work even in the non-isolated case.

\section{Analogues for the intersection complex}\label{ch:IC}

\subsection{On the relationship between Du Bois and intersection complexes}
Let $X$ be a complex variety of dimension $n$. Recall from \cite[Section 4.5]{Saito-MHM} that we have an object $\QQ^H_X[n] : = a_X^* \QQ^H_{\rm pt}$ in the derived category of mixed Hodge modules on $X$, with cohomologies in degrees $\le 0$; moreover, the top degree in the weight filtration on $\H^0  \QQ^H_X[n]$ is $n$. We also have the intersection complex $\IC_X\QQ^H$, a simple pure Hodge module of weight $n$; moreover, there is a composition of quotient morphisms
$$\gamma_X: \QQ^H_X[n]\to  \H^0  \QQ^H_X[n]\to \IC_X\QQ^H \simeq \gr^W_n  \H^0  \QQ^H_X[n].$$

We first record the following simple fact for later use. Here $\DDD_X (-)$ denotes the duality functor on the derived category of filtered $D$-modules underlying mixed Hodge modules (see \cite[Section 2.4]{Saito-PMHM}), and we abuse the notation by continuing to use the Hodge module notation for the respective filtered $D$-modules. Moreover $M (\ell)$ denotes the Tate twist of $M$, which at the level of filtered $D$-modules shifts the filtration down by $\ell$, i.e. 
$F_\bullet M(\ell) = F_{\bullet - \ell} M$.

\begin{lemma} \label{lemma: self dual of Q^H_X and rational homology}
    For a fixed integer $k$, the composition
    \[F_k \QQ^H_X[n] \to F_k \IC_X \QQ^H \to F_k \DDD_X(\QQ^H_X[n])(-n)\]
    is an isomorphism if any only if
    \[F_k \QQ^H_X[n] \to F_k \IC_X \QQ^H\]
    and 
    \[F_k \IC_X \QQ^H \to F_k \DDD_X(\QQ^H_X[n])(-n)\]
    are both isomorphisms.
\end{lemma}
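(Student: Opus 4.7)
The forward implication is immediate (the composition of two isomorphisms is an isomorphism), so I will focus on the converse. Writing $\alpha := F_k \gamma_X$ and $\beta$ for $F_k$ of the second morphism $\IC_X\QQ^H \to \DDD_X(\QQ^H_X[n])(-n)$, I assume that $\phi := \beta \circ \alpha$ is an isomorphism in the derived category. The plan is to exploit strictness of the Hodge filtration to reduce the question to a surjection of coherent sheaves in degree $0$.

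The first step is to show that both $F_k \QQ^H_X[n]$ and $F_k \DDD_X(\QQ^H_X[n])(-n)$ are concentrated in cohomological degree $0$. The inputs are: (i) the fact, recalled in the excerpt, that $\QQ^H_X[n]$ has cohomology in degrees $\le 0$; (ii) that Saito's duality $\DDD_X$ is exact and contravariant on mixed Hodge modules, hence reverses cohomological degrees on $D^b(\mathrm{MHM}(X))$, placing $\DDD_X(\QQ^H_X[n])(-n)$ in degrees $\ge 0$; and (iii) strictness of the Hodge filtration, which gives $\H^i F_k(-) \simeq F_k \H^i(-)$ on bounded complexes of mixed Hodge modules. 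A quasi-isomorphism between a complex in degrees $\le 0$ and one in degrees $\ge 0$ must be between objects concentrated in degree $0$; denote the resulting coherent sheaves by $A$ and $C$.

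The second step is to identify $\alpha\colon A \to F_k \IC_X \QQ^H$ with a surjection of coherent sheaves. By the excerpt, $\gamma_X$ factors as the composition of quotient morphisms of mixed Hodge modules $\QQ^H_X[n] \twoheadrightarrow \H^0\QQ^H_X[n] \twoheadrightarrow \IC_X\QQ^H$; applying $F_k$ and passing to $\H^0$, strictness turns the first arrow into the identification $\H^0 F_k \QQ^H_X[n] \simeq F_k \H^0 \QQ^H_X[n]$ and the second into a surjection of coherent sheaves. Combined with the first step, $\alpha$ is a surjective morphism of sheaves. Finally, the assumption that $\phi = \beta \circ \alpha$ is an isomorphism forces $\alpha$ to be injective as well, hence an isomorphism, and then $\beta = \phi \circ \alpha^{-1}$ is automatically an isomorphism. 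I do not anticipate any substantive obstacles beyond the routine (but careful) bookkeeping with strictness in Saito's theory, which is the only ``mixed Hodge module'' input used in the proof.
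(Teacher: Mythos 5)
Your proof is correct and takes essentially the same approach as the paper: strictness to commute $F_k$ with $\H^i$, concentration in degree zero, and surjectivity of $F_k\,\H^0 \QQ^H_X[n]\to F_k\, \IC_X\QQ^H$ coming from the weight quotient structure, after which the isomorphism assumption gives injectivity and hence the result. The only cosmetic difference is how the degree-zero concentration is established—you use the opposite degree bounds on $\QQ^H_X[n]$ and its dual, while the paper uses that the intermediate term $\IC_X\QQ^H$ is a single Hodge module concentrated in degree zero; both routes are equally valid.
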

\begin{proof}
The ``if" part is obvious, so we focus on the ``only if" part. Moreover, it suffices to prove that the first map is an isomorphism.

By strictness, the Hodge filtration commutes with taking cohomology, hence we have that the composition 
    \[F_k (\H^i \QQ^H_X[n]) \to F_k (\H^i \IC_X \QQ^H) \to F_k \big(\H^i \DDD_X(\QQ^H_X[n])(-n)\big)\]
    is an isomorphism for all $i \in \ZZ$. Thus for $i\neq 0$ we get 
    \[F_k (\H^i \QQ^H_X[n]) = \H^i(F_k \QQ^H_X[n]) = 0.\]
     When $i = 0$ we obtain an isomorphism 
    \[F_k (\H^0 \QQ^H_X[n]) \to  F_k \IC_X \QQ^H \to  F_k \big(\H^0 \DDD_X(\QQ^H_X[n])(-n)\big),\]
which implies that the first map is an injection.
   On the other hand, the fact that $ \IC_X\QQ^H \simeq \gr^W_n  \H^0  \QQ^H_X[n]$ implies that the morphism $\H^0  \QQ^H_X[n]\to \IC_X\QQ^H$ is 
   surjective at the level of  filtered $D$-modules. Putting everything together, we obtain isomorphisms
    \[F_k \QQ^H_X[n] \simeq F_k (\H^0 \QQ^H_X[n]) \simeq F_k \IC_X \QQ^H \]
   which implies what we want.
   \end{proof}

For what follows, recall that we use the notation
 $$\DD_X (-) := \RHom_{\O_X}(-,\omega_X^{\bullet}[-n]).$$
We will make repeated use of the well-known commutation of the two duality functors via the graded de Rham functor, proved in \cite[Section 2.4]{Saito-PMHM}: if $M^\bullet$ is an object in $D^b{\rm MHM} (X)$, and $p$ is any integer, then:
\begin{equation}\label{eqn:duality-MHM}
\DD_X \big(\gr^F_p \DR (M^\bullet)\big) \simeq \gr^F_{-p} \DR \big(\DDD_X (M^\bullet)\big) [-n].
\end{equation}

\medskip

It is a consequence of \cite[Theorem 4.2]{Saito-MHC} that for each $p$, we have the identification 
$$\DB^p_X \simeq \gr^F_{-p}\DR (\QQ_X^H [n] )[p-n].$$
We introduce the following notation for simplicity:
$$I\DB_X^p :=  \gr^F_{-p}\DR (\IC_X\QQ^H )[p-n].\footnote{These objects are called \emph{intersection Du Bois complexes} in \cite{PP}, where they are used extensively.}$$
Taking the composition of the morphism $\gamma_X$ with its dual, we get the natural morphisms in the derived category of mixed Hodge modules $D^b{\rm MHM}(X)$:
$$\QQ^H_X[n] \to \IC_X\QQ^H \to (\DDD_X(\QQ^H_X[n]))(-n)$$
due to the self-duality $\DDD_X (\IC_X\QQ^H) \cong \IC_X\QQ^H(n)$; see \cite[4.5.13]{Saito-MHM}. Applying the functor $\gr^F_{-p} \DR$ to this composition, we obtain the natural morphisms 
\begin{equation}\label{DB-to-IC}
\DB^p_X \xto{\varphi_p} I\DB^p_X \to \DD_X(\DB^{n-p}_X).
\end{equation}
in the derived category of coherent sheaves on $X$. 

An important point is that the higher rationality conditions say something about these maps. When $X$ is a local complete intersection, this is essentially contained 
in the proof of \cite[Theorem 3.1]{CDM}. 

\begin{proposition}\label{prop:rat_IC}
Let $X$ be a normal variety with pre-$k$-rational singularities. Then $\varphi_p$ is an isomorphism for all $p \le k$.
\end{proposition}
\begin{proof}
As discussed in Section \ref{scn:krat-kDB}, if $X$ is normal with pre-$k$-rational singularities, of dimension $n$,  
then the natural morphisms
$$\DB_X^p \to \DD_X (\DB_X^{n-p})$$
are isomorphisms for $p \le k$. Since 
    \[\DB^p_X \simeq \gr^F_{-p} \DR (\QQ^H_X[n])[p-n],\]
    and  moreover
    \begin{align*}
    \DD_X(\DB^{n-p}_X) &\simeq \DD_X(\gr^F_{p- n} \DR (\QQ^H_X[n])[-p]) \\ 
    &\simeq \gr^F_{n-p} \DR (\DDD_X(\QQ^H_X[n]))[p-n] \\
    &\simeq \gr^F_{-p} \DR \big (\DDD_X(\QQ^H_X[n])(-n)\big )[p- n]
    \end{align*}
    we obtain
    \[\gr^F_{-p} \DR(\QQ^H_X[n]) \simeq \gr^F_{-p} \DR \big (\DDD_X(\QQ^H_X[n])(-n)\big )\]
    for all $p\le k$. After dualizing via ($\ref{eqn:duality-MHM}$), this is equivalent to 
    \[\gr^F_{p} \DR(\QQ^H_X[n]) \simeq \gr^F_{p} \DR \big (\DDD_X(\QQ^H_X[n])(-n)\big )\]
    for all $p\le k-n$. According to the general Lemma \ref{lem:MHM-general} below, this is equivalent to 
    \[F_{p} \QQ^H_X[d_X] \simeq F_{p} \DDD_X(\QQ^H_X[n])(-n)\]
    for all $p\le k- n$. Lemma \ref{lemma: self dual of Q^H_X and rational homology} implies in turn 
    \[F_p \IC_X \QQ^H \simeq F_{p} \DDD_X(\QQ^H_X[n])(-n)\]
    for all $p\le k-n$, which again by Lemma \ref{lem:MHM-general} is equivalent  to 
    $$\gr^F_p \DR (\IC_X\QQ^H ) \simeq \gr^F_p \DR \big(\DDD_X(\QQ^H_X[n])(-n)\big)$$
    for $p \le k- n$. Applying ($\ref{eqn:duality-MHM}$) one more time, we see that
    \[\DB^p_X \simeq I\DB^p_X\]
    for all $p\le k$.
\end{proof}

\begin{remark}
Note that consequences of the isomorphisms in Proposition \ref{prop:rat_IC} regarding the topology of $X$ are studied in the upcoming \cite{DOR} and \cite{PP}. 
In particular, it is shown in these papers that if $\varphi_p$ is an isomorphism for all $p \le \lceil (n - 2)/2 \rceil$, then  $X$ is a rational homology manifold.
\end{remark}

The following useful lemma is a rather straightforward application of the definitions and the strictness of the Hodge filtration.

\begin{lemma}\label{lem:MHM-general}
 Let $M^\bullet, N^\bullet  \in {\rm D}^b {\rm MHM} (X)$ be objects in the bounded derived category of mixed Hodge modules on $X$. Then the following are equivalent:
    \begin{enumerate}
        \item $\gr^F_p \DR(M^\bullet)$ and  $\gr^F_p \DR(N^\bullet)$ are quasi-isomorphic for all $p\le k$.
        \item $F_p M^\bullet$ and $F_p N^\bullet$ are quasi-isomorphic for all $p\le k$. 
    \end{enumerate}
\end{lemma}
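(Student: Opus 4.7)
The plan is to prove both directions via a common reduction to comparing the graded pieces $\gr^F_p M^\bullet$ with $\gr^F_p N^\bullet$, and to use induction on the filtration index $p$. Since any object of ${\rm D}^b{\rm MHM}(X)$ has a Hodge filtration bounded below, for $p \ll 0$ all of $F_p M^\bullet$, $F_p N^\bullet$, $\gr^F_p M^\bullet$, $\gr^F_p N^\bullet$, and $\gr^F_p \DR(M^\bullet)$ vanish, supplying a base case. To make the induction go through, both conditions should be interpreted as pertaining to a common morphism $\phi\colon M^\bullet \to N^\bullet$ in ${\rm D}^b{\rm MHM}(X)$, as is the case in all anticipated applications (e.g.\ Proposition \ref{prop:rat_IC}), and the setup is to work after embedding $X$ into a smooth variety $Y$ so that the filtered $D$-modules and their de Rham complexes live on $Y$.

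The implication $(2)\Rightarrow (1)$ is essentially formal. Assuming $F_p\phi$ is a quasi-isomorphism for all $p\le k$, the exact triangles $F_{p-1} \to F_p \to \gr^F_p \overset{+1}{\longrightarrow}$ show that $\gr^F_p\phi$ is also a quasi-isomorphism for all $p\le k$. On the smooth ambient $Y$, the graded de Rham complex $\gr^F_p\DR(M^\bullet)$ has terms of the form $\gr^F_{p-n+i} M^\bullet \otimes \Omega_Y^i$ for $i = 0,\dots,n$, all of which involve graded pieces at indices $\le p \le k$. Hence $\gr^F_p\DR(\phi)$ is a term-wise quasi-isomorphism, and therefore a quasi-isomorphism of complexes, for each $p\le k$.

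For the converse $(1)\Rightarrow (2)$, the same triangle argument reduces the claim to showing that $\gr^F_p\phi$ is a quasi-isomorphism for all $p\le k$, which will be proved by induction on $p$. For the inductive step, assume $\gr^F_q\phi$ is a quasi-isomorphism for all $q<p$ and consider the brutal-truncation short exact sequence of complexes on $Y$
\[
0 \longrightarrow \sigma^{\le -1}\bigl(\gr^F_p\DR(M^\bullet)\bigr) \longrightarrow \gr^F_p\DR(M^\bullet) \longrightarrow \gr^F_p M^\bullet \otimes \omega_Y \longrightarrow 0,
\]
obtained by isolating the top-degree term of the de Rham complex. The subcomplex on the left involves only $\gr^F_q M^\bullet$ for $q<p$, so the induced morphism on it is a quasi-isomorphism by the inductive hypothesis; the middle morphism is a quasi-isomorphism by hypothesis $(1)$. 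The long exact cohomology sequences combined with the five-lemma then force the right-hand morphism, namely $\gr^F_p\phi \otimes \omega_Y$, to be a quasi-isomorphism, and tensoring with $\omega_Y^{-1}$ shows that $\gr^F_p\phi$ itself is a quasi-isomorphism, completing the induction.

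The main technical subtlety lies not in the logic of the argument but in choosing the appropriate representatives. For $X$ possibly singular, ${\rm MHM}(X)$ is defined via (local) embeddings into smooth varieties and the filtered $D$-module structure lives on $Y$; moreover, $M^\bullet$ is itself a complex, so $\gr^F_p\DR(M^\bullet)$ is really the totalization of a double complex, and the brutal truncation must be performed in the de Rham direction on a chosen model --- an operation which produces a well-defined object in $\DD^b_{\rm coh}(X)$ thanks to the strictness of the Hodge filtration. Once these conventions are fixed, the rest of the argument is formal, using only the exact triangles $F_{p-1}\to F_p\to \gr^F_p$ and the stupid filtration on the de Rham complex.
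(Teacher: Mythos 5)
Your proof is correct and takes essentially the same route the paper intends: write out $\gr^F_p\DR(M^\bullet)$ on a smooth ambient $Y$ as a complex whose terms involve only $\gr^F_q M^\bullet$ with $q\le p$, then run an induction using the exact triangles $F_{p-1}\to F_p\to \gr^F_p$ together with the stupid filtration on the de Rham complex and the five lemma. The paper's proof is reduced to a single line (``follows by induction on $k$'' after displaying the formula for $\gr^F_p\DR$), so your write-up is simply a fleshed-out version of it; your remarks that the statement should be read for a common morphism $\phi\colon M^\bullet\to N^\bullet$ and that the brutal truncation must be taken on a fixed representative of the double complex are the right caveats.
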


If $X$ is (locally) embedded into a smooth variety $Y$ of dimension $d$, we have
$$ \gr^F_{p} \DR(M^\bullet) =   \left[\gr^F_{p-d}M^\bullet  \otimes \wedge^{d} T_Y \to \dots \to \gr^F_{p-1} M^\bullet \otimes T_Y \to 
\gr^F_{p} M^\bullet \right],$$
placed in degrees $-d$ to $0$. Here, and in the rest of this paper, we use this notation to denote the total complex associated to the double complex where the vertical 
maps come from the differentials of a representative of $M^\bullet$, while the horizontal maps are the usual de Rham maps on each term of that representative. The Lemma then follows  by induction on $k$.

\subsection{Injectivity results and conjecture for the intersection complex}\label{scn:injectivity-IC}
We start with an injectivity conjecture which is the intersection complex analogue of the main Conjecture \ref{conj:injectivity-H0DB}.

\begin{conjecture}\label{conj:IC2}
If $X$  has normal and pre-$(k-1)$-rational singularities, the natural morphism
$$\RHom(I\DB_X^k,\omega_X^{\bullet}) \to \RHom(\H^0 I\DB_X^k,\omega_X^{\bullet})$$
obtained by dualizing the canonical morphism $\H^0 I\DB_X^k \to I\DB_X^k$ is injective on cohomology.
\end{conjecture}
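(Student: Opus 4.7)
My plan is to reduce Conjecture \ref{conj:IC2} to two injectivity inputs already available in the paper: the main Theorem \ref{thm:injectivity-H0DB} (or, when needed, its conjectural extension Conjecture \ref{conj:injectivity-H0DB}) for Du Bois complexes, and the unconditional Theorem \ref{thm:IC1} relating the duals of $\DB_X^k$ and $I\DB_X^k$ through the canonical morphism $\phi_k \colon \DB_X^k \to I\DB_X^k$ obtained by applying $\gr^F_{-k}\DR$ to $\gamma_X \colon \QQ_X^H[n] \to \IC_X\QQ^H$. The bridge between the hypothesis of the conjecture and the Du Bois input is the implication that normal pre-$(k-1)$-rational singularities are also pre-$(k-1)$-Du Bois; this is precisely the argument of Section \ref{scn:krat-kDB}, which becomes available as soon as Theorem \ref{thm:injectivity-H0DB} applies, and is unconditional in the isolated case.

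Concretely, I would contemplate the commutative square of duals arising from the morphism of canonical maps $(\H^0\DB_X^k \to \DB_X^k) \to (\H^0 I\DB_X^k \to I\DB_X^k)$ induced by $\phi_k$:
\[
\begin{tikzcd}
\RHom_{\O_X}(I\DB_X^k,\omega_X^{\bullet}) \ar[r, "\psi_I"] \ar[d, "u"'] & \RHom_{\O_X}(\H^0 I\DB_X^k,\omega_X^{\bullet}) \ar[d, "v"] \\
\RHom_{\O_X}(\DB_X^k,\omega_X^{\bullet}) \ar[r, "\psi_D"'] & \RHom_{\O_X}(\H^0 \DB_X^k,\omega_X^{\bullet}).
\end{tikzcd}
\]
The left vertical arrow $u$ is injective on cohomology by Theorem \ref{thm:IC1}, and, under the pre-$(k-1)$-Du Bois property guaranteed above, the bottom arrow $\psi_D$ is injective on cohomology by the Du Bois injectivity. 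Hence $\psi_D \circ u$ is injective on cohomology, and by commutativity so is $v \circ \psi_I$. This forces $\psi_I$ — which is exactly the morphism in Conjecture \ref{conj:IC2} — to be injective on cohomology.

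The main obstacle is that this reduction is only as strong as its Du Bois input: for isolated singularities Theorem \ref{thm:injectivity-H0DB} applies and the strategy gives Conjecture \ref{conj:IC2} unconditionally in that range, but in the fully general non-isolated setting one is forced to invoke Conjecture \ref{conj:injectivity-H0DB}, whose proof seems to require a theory of the Hodge filtration on local cohomology for arbitrary singularities that is not currently available outside the local complete intersection case. As an alternative, more direct attack, I would try to mimic the proof of Theorem \ref{thm:injectivity-H0DB-gen} in the intersection setting: reduce to $X$ projective, invoke the $E_1$-degeneration of the Hodge-to-de Rham spectral sequence for the pure Hodge module $\IC_X\QQ^H$, construct intersection analogues $I\Omega_{X,h}^{\le p}$ and $I\DB_X^{\le p}$ from the Hodge filtration on $\IC_X\QQ^H$ together with the natural maps $\H^0 I\DB_X^p \to I\DB_X^p$, and run the diagram chase of Proposition \ref{surjection} to produce a surjection $H^i(X, \H^0 I\DB_X^k) \to \HH^i(X, I\DB_X^k)$, using Proposition \ref{prop:rat_IC} plus the pre-$(k-1)$-rational hypothesis to guarantee the required isomorphism in the degree-$(k-1)$ column. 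Dualizing would then yield hypercohomology injectivity, but converting this into injectivity of the $\sExt^i$ sheaves demands, exactly as in the isolated Du Bois argument, that the relevant cone have zero-dimensional support. This fails outside the isolated case, so the bottleneck reappears in precisely the same place and, in retrospect, explains why Conjecture \ref{conj:IC2} is stated as a conjecture rather than a theorem.
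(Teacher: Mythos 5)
Your reduction via the commutative square is precisely the paper's argument (modulo transposing the diagram): the paper applies Theorem~\ref{thm:IC1} for the arrow dualizing $\phi_k$ and the Du Bois injectivity (Theorem~\ref{thm:injectivity-H0DB} or Conjecture~\ref{conj:injectivity-H0DB}) for the other side of the square, then concludes injectivity of $\psi_I$ from injectivity of the composite, exactly as you do. One small correction: the paper invokes the implication ``normal pre-$(k-1)$-rational $\Rightarrow$ pre-$(k-1)$-Du Bois'' directly from [SVV, Theorem~B], which holds unconditionally for arbitrary singularities, so that step does not depend on the injectivity theorem as your phrasing suggests; the only genuine dependency on Theorem~\ref{thm:injectivity-H0DB} is in the map $\psi_D$, and your identification of the cases in Corollary~\ref{cor:conj-IC} is correct.
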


\begin{remark}\label{rmk:H0-IC}
It is shown in \cite[Proposition 8.1]{KS-extending} that for all $k$ we have an isomorphism 
$$\H^0 I\DB_X^k \simeq f_* \Omega_{\widetilde X}^k,$$
where $f \colon \widetilde{X} \to X$ is a resolution of singularities.
\end{remark}

Note that there is a commutative diagram 
    \[\begin{tikzcd}
        &\RHom(I\DB_X^k,\omega_X^{\bullet})\ar[d] \ar[r]& \RHom(\DB_X^k,\omega_X^{\bullet}) \ar[d] \\
        &\RHom(\H^0 I\DB_X^k,\omega_X^{\bullet})\ar[r]& \RHom(\H^0 \DB_X^k,\omega_X^{\bullet}).
    \end{tikzcd}\]
Moreover, if $X$ has normal pre-$(k-1)$-rational singularities, then it also has pre-$(k-1)$-Du Bois singularities 
by \cite[Theorem B]{SVV}. Therefore Conjecture \ref{conj:IC2} is in fact implied by Conjecture \ref{conj:injectivity-H0DB}, thanks to the following:

\begin{theorem}\label{thm:IC1}
If $X$ has normal pre-$(k-1)$-rational singularities, the morphism
$$\RHom(I\DB_X^k,\omega_X^{\bullet}) \to \RHom(\DB_X^k,\omega_X^{\bullet})$$
obtained by dualizing $\varphi_k$ is injective on cohomology. 
\end{theorem}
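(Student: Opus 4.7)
The plan is to adapt the strategy from Theorem~\ref{thm:injectivity-H0DB-gen} to the intersection complex side, replacing the constant Hodge module $\QQ^H_X[n]$ by $\IC_X\QQ^H$ throughout. Since the statement is local, I would first reduce to the case that $X$ is projective by choosing a normal projective compactification $\bar X$ whose singular locus coincides (analytically) with that of $X$, so that $\bar X$ also has pre-$(k-1)$-rational singularities. Next, I would observe that $\varphi_k$ arises from applying $\gr^F_{-k}\DR(-)[k-n]$ to the canonical map $\gamma_X \colon \QQ^H_X[n] \to \IC_X\QQ^H$ in $D^b{\rm MHM}(\bar X)$; by Grothendieck--Serre duality on the projective $\bar X$, the injectivity on cohomology of $\RHom(I\DB_{\bar X}^k, \omega_{\bar X}^{\bullet}) \to \RHom(\DB_{\bar X}^k, \omega_{\bar X}^{\bullet})$ is equivalent to the surjectivity on hypercohomology
\[\HH^i(\bar X, \DB_{\bar X}^k) \to \HH^i(\bar X, I\DB_{\bar X}^k) \quad \text{for all } i.\]

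To prove this surjectivity I would mimic Proposition~\ref{surjection}. Denote by $\DB_{\bar X}^{\le k}$ and $I\DB_{\bar X}^{\le k}$ the truncations of the filtered de Rham complexes of $\QQ^H_{\bar X}[n]$ and $\IC_{\bar X}\QQ^H$ respectively. The $E_1$-degeneration of the Hodge-to-de Rham spectral sequence for the pure Hodge module $\IC_{\bar X}\QQ^H$ on a projective variety (Saito's theorem), together with the classical Du Bois/Deligne degeneration for $\QQ^H_{\bar X}[n]$, yields compatible surjections from topological cohomology onto the hypercohomologies of these truncations. Since $X$ has pre-$(k-1)$-rational singularities, Proposition~\ref{prop:rat_IC} gives $\DB_{\bar X}^p \simeq I\DB_{\bar X}^p$ for all $p \le k-1$, so the truncations at level $k-1$ already coincide and the required comparison is localized at the single degree $k$. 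A morphism of exact triangles connecting $\DB_{\bar X}^k[-k] \to \DB_{\bar X}^{\le k} \to \DB_{\bar X}^{\le k-1}$ with its intersection analogue, together with the isomorphism in the right-most column, should then reduce the surjectivity of $\HH^i(\bar X, \varphi_k)$ to a surjectivity on the middle column.

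The main obstacle is bridging between the truncated filtered complex of $\IC_{\bar X}\QQ^H$ and the individual graded piece $I\DB_{\bar X}^k$ at the critical degree, since surjectivity between the topological cohomologies of $\bar X$ and its intersection cohomology does not hold in general. This is where the auxiliary Theorem~\ref{thm:injectivity-Rk} of S.\,G.\,Park enters: it provides an unconditional injectivity statement for a certain complex $R_k$ interpolating between the filtered de Rham complexes of $\QQ^H_X[n]$ and of $\IC_X\QQ^H$, precisely at the level that governs the gap obstructing the lifting argument. A diagram chase fusing this input with the two $E_1$-degenerations should then close the surjectivity on hypercohomology, and Grothendieck--Serre duality gives the stated injectivity. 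The hardest step is expected to be the compatibility between Park's injectivity for $R_k$ and the filtered double complex computing $\gr^F\DR$ of $\gamma_X$, since this is what forces the use of strictness of the Hodge filtration on a non-exact sequence of complexes of mixed Hodge modules rather than a short exact one.
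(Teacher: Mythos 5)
Your proposal takes a fundamentally different route from the paper, and the route you sketch has a genuine gap. The paper's proof of Theorem~\ref{thm:IC1} is essentially a two-line combination of results already in the section: Proposition~\ref{prop:rat_IC} shows that normal pre-$(k-1)$-rational singularities force $\varphi_p\colon \DB_X^p\to I\DB_X^p$ to be an isomorphism for all $p\le k-1$, and Theorem~\ref{thm:injectivity-Rk} then gives the injectivity on cohomology of the dual of $\varphi_k$ \emph{directly} under exactly that hypothesis. There is no intermediate object ``$R_k$ interpolating between the filtered de Rham complexes''; you appear to have reverse-engineered that from the theorem label. Theorem~\ref{thm:injectivity-Rk} is itself the desired injectivity statement, proved by a purely local argument: one uses the self-duality $\DDD(\IC_X\QQ^H)\simeq \IC_X\QQ^H(n)$ to control $\sExt^i(I\DB_X^k,\omega_X^\bullet)$ for $i>k-n$, identifies the ``tail'' of the filtered de Rham complexes in degrees $<0$ using the hypothesis on $\varphi_p$ ($p\le k-1$), invokes the octahedral axiom, and concludes from the injection $F_{k-n}\IC_X\QQ^H\hookrightarrow F_{k-n}\H^0\DDD_X(\QQ^H_X[n])(-n)$ at the level of filtered $D$-modules. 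No projective reduction and no global Hodge-to-de Rham degeneration are involved.

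By contrast, your proposed strategy transplants the projective degeneration argument of Proposition~\ref{surjection}, and this runs into two concrete obstructions. First, you acknowledge yourself that the map from ordinary cohomology of $\bar X$ to intersection cohomology is not surjective in general; that failure is exactly the nontrivial input that Theorem~\ref{thm:injectivity-Rk} supplies, but you cannot both treat it as an unknown ``gap'' in your degeneration diagram and simultaneously cite a (misremembered) form of Theorem~\ref{thm:injectivity-Rk} to fill it, without at that point having reduced to the paper's argument anyway. Second, the reduction to a projective compactification $\bar X$ whose singular locus coincides with that of $X$ is only justified when the singular locus is finite, as in the proof of Theorem~\ref{thm:injectivity-H0DB-gen}; Theorem~\ref{thm:IC1} makes no isolatedness assumption, and in general one cannot compactify a quasi-projective variety with positive-dimensional singular locus without changing the singularities at the boundary, so the reduction step as stated does not go through. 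The upshot is that the intersection-complex injectivity is cleanest to prove locally in the filtered $D$-module category, and the global-degeneration approach — which worked for $\QQ^H_X[n]$ because the constant sheaf degeneration is available on a compactification with the same singularities — is not the right tool here.
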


\begin{corollary}\label{cor:conj-IC}
Conjecture \ref{conj:IC2} holds in any of the following cases:
\begin{enumerate}
\item $k =0$.
\item $X$ has isolated singularities.
\item $X$ is a local complete intersection with $(k-1)$-rational singularities.
\end{enumerate}
\end{corollary}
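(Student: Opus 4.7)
The plan is to deduce the corollary by a diagram chase on the commutative square displayed just before the statement. Theorem~\ref{thm:IC1} shows that the top horizontal map is injective on cohomology whenever $X$ has normal pre-$(k-1)$-rational singularities; this is the essential new input on the intersection complex side. If, in each of the three listed cases, the right vertical map is also injective on cohomology, then the composition around the upper right is injective on cohomology, so by commutativity of the square the composition around the lower left is as well, which forces the left vertical map (the map appearing in Conjecture~\ref{conj:IC2}) to be injective on cohomology.

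Consequently, the task reduces to confirming Conjecture~\ref{conj:injectivity-H0DB} in each of the three cases. In \textbf{case (1)}, $k=0$, so the hypothesis amounts to $X$ being normal; then $\H^0\DB_X^0 \simeq \O_X$ and the right vertical map is exactly the Kov\'acs--Schwede injectivity $\RHom_{\O_X}(\DB_X^0,\omega_X^{\bullet}) \to \RHom_{\O_X}(\O_X,\omega_X^{\bullet})$ from \cite{KS-injectivity}, which holds unconditionally. In \textbf{case (2)}, normal pre-$(k-1)$-rational singularities are automatically pre-$(k-1)$-Du Bois by the argument in Section~\ref{scn:krat-kDB}, so the additional isolated-singularities hypothesis places us squarely within Theorem~\ref{thm:injectivity-H0DB}.

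In \textbf{case (3)}, $X$ is a local complete intersection, and the LCI identifications among higher singularity conditions recalled in Section~\ref{sec:higher-singularities} allow us to pass from normal pre-$(k-1)$-rational to $(k-1)$-Du Bois, putting us in the setting of Theorem~\ref{thm:MP-injectivity} from \cite{MP-lci}. That theorem gives injectivity on cohomology of
\[\RHom_{\O_X}(\DB_X^k,\omega_X^{\bullet}) \to \RHom_{\O_X}(\Omega_X^k,\omega_X^{\bullet}).\]
Since the canonical map factors as $\Omega_X^k \to \H^0\DB_X^k \to \DB_X^k$, its dual factors as
\[\RHom_{\O_X}(\DB_X^k,\omega_X^{\bullet}) \to \RHom_{\O_X}(\H^0\DB_X^k,\omega_X^{\bullet}) \to \RHom_{\O_X}(\Omega_X^k,\omega_X^{\bullet}),\]
and injectivity on cohomology of the composite forces the first factor to be injective on cohomology as well.

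The argument is essentially formal once Theorem~\ref{thm:IC1} is in hand, so there is no single principal obstacle. The main bookkeeping is to reconcile the hypothesis (normal + pre-$(k-1)$-rational) with the precise singularity condition each invoked injectivity result requires, which is handled by the implications between the various higher singularity notions recorded in Sections~\ref{sec:higher-singularities} and~\ref{scn:krat-kDB}.
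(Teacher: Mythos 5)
Your proposal is correct and follows essentially the same route as the paper: reduce via the commutative square (together with Theorem~\ref{thm:IC1} and the implication ``normal pre-$(k-1)$-rational $\Rightarrow$ pre-$(k-1)$-Du Bois'') to verifying Conjecture~\ref{conj:injectivity-H0DB} in each case, which is supplied by \cite{KS-injectivity}, Theorem~\ref{thm:injectivity-H0DB}, and \cite{MP-lci} respectively. The only thing you spell out more explicitly than the paper is the factorization through $\Omega_X^k$ in the LCI case, which is a correct and useful clarification.
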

\begin{proof}
In each of these cases we have the corresponding injectivity theorem for the Du Bois complex, answering Conjecture \ref{conj:injectivity-H0DB}:
for $k = 0$ by \cite{KS-injectivity}, for isolated singularities by Theorem \ref{thm:injectivity-H0DB} here, and for local complete intersections by \cite{MP-lci} (note that 
$(k-1)$-rational singularities are $(k-1)$-Du Bois).
\end{proof}

Theorem \ref{thm:IC1} is in turn a consequence of Proposition \ref{prop:rat_IC}, combined with the following injectivity theorem, which is 
the main technical result of this section. It was first communicated to us by Sung Gi Park, whom we thank, using the technique of \cite[Lemma 3.7]{Park}; we follow the method of the previous section.


\begin{theorem}[{Sung Gi Park}]\label{thm:injectivity-Rk}
Assume that the variety $X$ satisfies the property that $\varphi_p \colon \DB_X^p \to I\DB_X^p$ is an isomorphism for $p \le k-1$. 
Then the morphism
$$\RHom_{\O_X}(I\DB_X^k,\omega_X^{\bullet}) \to \RHom_{\O_X}(\DB_X^k,\omega_X^{\bullet})$$
obtained by dualizing $\varphi_k$ is injective on cohomology. 
More precisely, it is an isomorphism on $i$-th cohomology for $i \le k-n - 1$, injective for $i= k- n$, and $\sExt^i_{\O_X}(I\DB_X^k,\omega_X^{\bullet}) = 0$ for $i >k - n$.
\end{theorem}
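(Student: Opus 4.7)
The plan is to trace the consequences of the hypothesis through the exact triangle that defines $\varphi_k$, then apply Verdier duality and a strictness/filtration argument. First I set up the defining exact triangle in $D^b{\rm MHM}(X)$,
$$K \to \QQ^H_X[n] \xrightarrow{\gamma_X} \IC_X\QQ^H \xrightarrow{+1},$$
and apply $\gr^F_{-k}\DR(-)[k-n]$ to obtain
$$\gr^F_{-k}\DR(K)[k-n] \to \DB_X^k \xrightarrow{\varphi_k} I\DB_X^k \xrightarrow{+1}$$
in $D^b_{\rm coh}(X)$. The hypothesis that $\varphi_p$ is an isomorphism for $p \le k-1$ is equivalent to $\gr^F_q \DR(K) = 0$ for $q \ge 1-k$. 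Via the MHM-level duality formula (\ref{eqn:duality-MHM}), this translates into $\gr^F_p \DR(\DDD_X(K)) = 0$ for $p \le k-1$, and Lemma \ref{lem:MHM-general} (applied to $\DDD_X(K)$ and $0$) upgrades this to the crucial filtration vanishing $F_p \DDD_X(K) = 0$ for $p \le k-1$.

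Next I dualize the triangle above: the cone of
$$\psi\colon \RHom_{\O_X}(I\DB_X^k,\omega_X^\bullet) \to \RHom_{\O_X}(\DB_X^k,\omega_X^\bullet)$$
identifies, after unwinding Grothendieck duality together with (\ref{eqn:duality-MHM}), with $\gr^F_k\DR(\DDD_X(K))[n-k]$. Since $\gr^F_{k-j}\DDD_X(K) = 0$ for all $j \ge 1$, every term in the explicit formula for $\gr^F_k \DR(\DDD_X(K))$ except the rightmost $(j = 0)$ column vanishes, so $\gr^F_k \DR(\DDD_X(K))$ is quasi-isomorphic to the single term $\gr^F_k \DDD_X(K)$ (viewed as a complex concentrated in de Rham degree $0$).

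To finish, note that the long exact sequence for $\gamma_X$ puts $K$ in $D^{\le 0}{\rm MHM}(X)$ (using $\QQ_X^H[n] \in D^{\le 0}$ and $\IC_X\QQ^H$ in degree $0$), so $\DDD_X(K) \in D^{\ge 0}{\rm MHM}(X)$ by the $t$-exactness of $\DDD_X$, and by strictness $\gr^F_k \DDD_X(K)$ has $\H^i = \gr^F_k\H^i\DDD_X(K) = 0$ for $i < 0$. Hence ${\rm Cone}(\psi)$ has cohomology concentrated in degrees $\ge k - n$, and the long exact sequence on cohomology gives the asserted isomorphism for $i \le k - n - 1$ and injectivity at $i = k - n$. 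For the $\sExt$-vanishing for $i > k - n$, use the self-duality $\DDD_X(\IC_X\QQ^H) \simeq \IC_X\QQ^H(n)$ to identify $\RHom_{\O_X}(I\DB_X^k,\omega_X^\bullet) \simeq I\DB_X^{n-k}[n]$; the fact that $I\DB_X^{n-k}$ lives in cohomological degrees $\le k$ is immediate from its definition and the range of the $\DR$-complex, yielding the desired vanishing.

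The main obstacle is careful bookkeeping of shifts and filtration indices through the interplay of $\DR$, $\gr^F$, Tate twists, Grothendieck duality, and the MHM duality $\DDD_X$; in particular Lemma \ref{lem:MHM-general} has to be applied in the correct direction (to $\DDD_X(K)$ after dualizing, rather than directly to $K$, since the original vanishing $\gr^F_q\DR(K) = 0$ is at the ``wrong'' (large) end of the filtration and must be flipped by $\DDD_X$ before the lemma applies).
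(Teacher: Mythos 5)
Your proof is correct and reaches the conclusion by a genuinely different route. The paper works with the full composition $\QQ^H_X[n] \to \IC_X\QQ^H \to \DDD_X(\QQ^H_X[n])(-n)$ rather than with its fiber: via (\ref{eqn:duality-MHM}) and Lemma \ref{lem:MHM-general} it reduces to showing that $\gr^F_{k-n}\DR(\IC_X\QQ^H) \to \gr^F_{k-n}\DR(M^\bullet)$, with $M^\bullet := \DDD_X(\QQ^H_X[n])(-n)$, is an isomorphism in negative degrees and injective in degree zero; it then splits each $\gr^F_{k-n}\DR$ into its degree-$0$ entry and the remaining subcomplex (which the hypothesis makes agree on the two sides), and uses the octahedral axiom to reduce to the triangle $\gr^F_{k-n}\IC_X\QQ^H \to \gr^F_{k-n}M^\bullet \to C^\bullet \xto{+1}$; the needed vanishing $\H^i C^\bullet = 0$ for $i<0$ then comes from $\H^i M^\bullet = 0$ for $i<0$ together with the injectivity, at the level of filtered $D$-modules, of $\IC_X\QQ^H \hookrightarrow \H^0 M^\bullet$. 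You encode the same input in $\DDD_X(K)$ for $K$ the fiber of $\gamma_X$: the hypothesis becomes $F_p\DDD_X(K)$ acyclic for $p\le k-1$, the cone of $\psi$ is identified with $\gr^F_k\DR(\DDD_X(K))[n-k]$ and collapses to $\gr^F_k\DDD_X(K)[n-k]$, and strictness together with $\DDD_X(K)\in D^{\ge 0}$ gives the degree bound directly. Your route is arguably tidier — it dispenses with the octahedral axiom and makes the cohomological amplitude of the cone of $\psi$ manifest. One small gap to fill: to conclude $K\in D^{\le 0}\mathrm{MHM}(X)$, the degree bounds on $\QQ^H_X[n]$ and $\IC_X\QQ^H$ alone only give $\H^i K = 0$ for $i\ge 2$; for $\H^1 K = 0$ you also need the surjectivity of $\H^0(\gamma_X)$, which holds because it is the quotient $\H^0\QQ^H_X[n]\twoheadrightarrow \gr^W_n\H^0\QQ^H_X[n]=\IC_X\QQ^H$ — precisely the surjectivity the paper invokes at the end of its own proof, so the two arguments ultimately rest on the same ingredients.
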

\begin{proof}  
By (\ref{eqn:duality-MHM}), we have $\DD_X (I\DB_X^k) \simeq I\DB_X^{n-k}$, so
$$\E xt^i_{\O_X}(I\DB_X^k, \omega_X^{\bullet}) \cong \H^{i+n} I\DB_X^{n-k}  =  0 \,\,\,\,\,\,{\rm for}\,\,\,\, i > k - n.$$
Thus we focus on the statements for $i \le k -n$.

As in the proof of Proposition \ref{prop:rat_IC}, the assumption implies that we have
isomorphisms 
\begin{equation}\label{eqn:HF}
	F_{p} \IC_X\QQ^H  \simeq F_{p} \DDD_X(\QQ^H_X[n])(-n)  \,\,\,\,\,\,{\rm for}\,\,\,\, p \le k-1-n.
\end{equation}


Using (\ref{eqn:duality-MHM}), the conclusion is equivalent to the fact that the map
    $$\gr^F_{k-n} \DR (\IC_X\QQ^H ) \to \gr^F_{k-n} \DR \big(\DDD_X(\QQ^H_X[n])(-n)\big)$$
is an isomorphism on $i$-th cohomology for $i < 0$, and injective for $i =0$.

To simplify the notation, we set $M^\bullet := \DDD_X(\QQ^H_X[n])(-n)$. We also think of $X$ as being (locally) embedded in a smooth 
variety $Y$ of dimension $d$, so that we have 
$$ \gr^F_{k-n} \DR(M^\bullet) =   \left[\gr^F_{k-n-d}M^\bullet  \otimes \wedge^{d} T_Y \to \dots \to \gr^F_{k-n-1} M^\bullet \otimes T_Y \to 
\gr^F_{k-n} M^\bullet \right],$$
placed in degrees $-d$ to $0$. In other words, we have an exact triangle
$$\gr^F_{k-n} M^\bullet \to \gr^F_{k-n} \DR(M^\bullet) \to A^\bullet \xto{+1},$$
where 
$$A^\bullet : =   \left[\gr^F_{k-n-d}M^\bullet  \otimes \wedge^{d} T_Y \to \dots \to \gr^F_{k-n-1} M^\bullet \otimes T_Y\right],$$
placed in degrees $-d$ to $-1$.
Note that ($\ref{eqn:HF})$ implies the isomorphism  
 \[   \left[\gr^F_{k-n-d} \IC_X \QQ^H \otimes \wedge^{d} T_Y \to \dots \to \gr^F_{k-n-1} \IC_X \QQ^H \otimes T_Y\right]  \overset{\simeq}{\longrightarrow} A^\bullet,\]
    hence we also have a similar exact triangle with $M^\bullet$ replaced by $\IC_X \QQ^H$.

   %
   
    
  Consider now the exact triangle 
    \[\gr^F_{k-n} \DR(\IC_X \QQ^H) \to \gr^F_{k-n} \DR(M^\bullet) \to C^\bullet \xto{+1}\]
    where $C^\bullet$ denotes the cone of the morphism on the left. Since $\gr^F_{k-n}\DR(\IC_X \QQ^H)$ is a complex placed in non-positive degrees, we only need to show that 
    \[\H^i C^\bullet  = 0 \,\,\,\,\,\,{\rm for} \,\,\,\, i<0.\]
    
    Given the triangles described above, the octahedral axiom implies that 
    we have an exact triangle 
 \[\gr^F_{k-n} \IC_X \QQ^H \to \gr^F_{k-n} M^\bullet \to C^\bullet \xto{+1}\]   
as well. Hence the needed statement about $\H^i C^\bullet$ follows immediately from the following facts.
On the one hand, 
\[\H^i \gr^F_{k-n} M^\bullet \simeq \gr^F_{k-n} \H^i M^\bullet = 0 \,\,\,\,\,\,{\rm for} \,\,\,\, i < 0, \]
 since $M^\bullet$ has nontrivial cohomologies only in non-negative degrees (being essentially the dual of $\QQ_X^H [n]$, which has nontrivial 
 cohomologies in non-positive degrees). On the other hand, $\H^i \IC_X \QQ^H = 0$ for $i \neq 0$, and $F_{k - n} \IC_X \QQ^H \hookrightarrow 
 F_{k-n} \H^0 M^\bullet$ (while at the level of $F_{k-n-1}$ we have equality by ($\ref{eqn:HF}$)). Indeed, we have seen in the proof of Lemma \ref{lemma: self dual of Q^H_X and rational homology} that the morphism 
$\H^0 \QQ_X^H \to  \IC_X \QQ^H$ is surjective at the level of filtered $D$-modules; similarly, by duality, we have that the 
morphism $\IC_X \QQ^H \to \H^0 M^\bullet$ is injective at the level of filtered $D$-modules. All of this implies that we have an injection
\[\gr^F_{k-n} \IC_X \QQ^H \to \gr^F_{k-n} \H^0 M^\bullet.\]
This completes the proof.  
\end{proof}

\subsection{Vanishing of higher cohomologies for intersection complexes}\label{scn:vanishing-IC}
We finish with results about the vanishing of higher cohomologies of $I\DB_X^k$. 
We start with the following proposal, analogous to Conjecture \ref{conj:vanishing-general}:

\begin{conjecture}\label{conj:vanishing-general-IC}
 Let $X$ be a normal variety with pre-$(k-1)$-rational singularities, and pre-$k$-rational away from a closed subset of dimension $s$. Then
 \[\H^i I \DB^k_X = 0 \quad \text{ {\rm for} } \quad 0<i<\depth\H^0 I \DB^k_X- s -1.\] 
\end{conjecture}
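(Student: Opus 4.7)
The plan is to mirror the derivation of Theorem \ref{thm:vanishing-isolated} from Proposition \ref{prop:vanishing-general} via Theorem \ref{thm:injectivity-H0DB}, replacing $\DB_X^k$ by $I\DB_X^k$ throughout. The two ingredients are the general homological Proposition \ref{prop:derived} applied to $A^\bullet = I\DB_X^k$, together with the injectivity Conjecture \ref{conj:IC2}.

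First I would check that the hypotheses of Proposition \ref{prop:derived} hold for $I\DB_X^k$. The concentration of $I\DB_X^k$ in non-negative degrees is a standard property of the intersection Du Bois complexes---perhaps most cleanly extracted from the upcoming \cite{PP}, though it can be verified directly using that $I\DB_X^k|_U \simeq \Omega_U^k$ on the smooth locus $U$ together with the standard structure of the filtered $D$-module underlying $\IC_X\QQ^H$. For the support condition, let $V \subseteq X$ be the open locus where $X$ is pre-$k$-rational; by Proposition \ref{prop:rat_IC} the map $\varphi_k \colon \DB_X^k|_V \to I\DB_X^k|_V$ is an isomorphism, and since pre-$k$-rational implies pre-$k$-Du Bois by \cite[Theorem B]{SVV}, the higher cohomologies vanish on $V$, hence are supported in $X\setminus V$, a closed subset of dimension at most $s$. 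Proposition \ref{prop:derived} then yields
\[\H^i I\DB_X^k = 0 \quad \text{for}\quad 0 < i < \min\{\depth \H^0 I\DB_X^k,\ \depth I\DB_X^k + 1\} - s - 1.\]

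Second, to eliminate the second term inside the minimum, I would invoke Conjecture \ref{conj:IC2}, whose hypothesis (normal and pre-$(k-1)$-rational) is precisely what we have assumed. Its conclusion is that the dualized canonical map $\RHom_{\O_X}(I\DB_X^k,\omega_X^\bullet) \to \RHom_{\O_X}(\H^0 I\DB_X^k,\omega_X^\bullet)$ is injective on cohomology, giving injections
\[\sExt^i_{\O_X}(I\DB_X^k,\omega_X^\bullet) \hookrightarrow \sExt^i_{\O_X}(\H^0 I\DB_X^k,\omega_X^\bullet)\]
for every $i$. In view of the characterization \eqref{eqn:depth} of depth, this forces $\depth I\DB_X^k \ge \depth \H^0 I\DB_X^k$, so the minimum equals $\depth \H^0 I\DB_X^k$ and the desired range emerges.

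The main obstacle is of course Conjecture \ref{conj:IC2}, which is the only ingredient beyond formal homological algebra; in the cases where that conjecture is known---namely $k=0$, isolated singularities, and local complete intersections, per Corollary \ref{cor:conj-IC}---the argument above yields Conjecture \ref{conj:vanishing-general-IC} unconditionally in the same cases. A minor technical point to verify carefully is that $\supp \H^i I\DB_X^k \subseteq X\setminus V$ set-theoretically, but this is a standard local check since both $I\DB_X^k$ and $\DB_X^k$ are coherent and $\varphi_k$ is an isomorphism on $V$, so their cohomology sheaves agree there.
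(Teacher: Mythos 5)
Your proposal is correct and follows essentially the same route the paper takes: apply the abstract depth bound of Proposition \ref{prop:derived} to $A^\bullet = I\DB_X^k$ (using Proposition \ref{prop:rat_IC} together with the implication pre-$k$-rational $\Rightarrow$ pre-$k$-Du Bois to control the support of the higher cohomology sheaves), and then invoke Conjecture \ref{conj:IC2} to eliminate the $\depth I\DB_X^k$ term from the minimum. The paper does exactly this in the discussion surrounding (\ref{eqn:depth-IC}) and Corollary \ref{cor:IC-van}, likewise deducing the conjecture unconditionally only in the cases ($k=0$, isolated, lci) where Conjecture \ref{conj:IC2} is established via Corollary \ref{cor:conj-IC}.
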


When $X$ is a normal variety with pre-$k$-rational singularities, it follows from Proposition \ref{prop:rat_IC} that $\H^i I\DB_X^p = 0$ for all $i > 0$ and $p \le k$. If it is so only away from a closed set of dimension $s$, then  an argument completely analogous to that of Proposition \ref{prop:vanishing-general} implies that 
\begin{equation}\label{eqn:depth-IC}
 \H^i I\DB^k_X = 0 \quad \text{ {\rm for} } \quad 0<i< n_k - s -1,
 \end{equation}
 where $n_k := {\rm min}~ \{\depth\H^0 I \DB^k_X, ~\depth I\DB^k_X + 1\}$.

If moreover $X$ has normal pre-$(k-1)$-rational isolated or $(k-1)$-rational local complete intersection singularities,  or if $k = 0$, then due to Corollary \ref{cor:conj-IC} we have that Conjecture \ref{conj:IC2} holds, hence exactly as in the proof of Theorem \ref{thm:vanishing-isolated} we have in addition that $\depth  I \DB_X^k\ge \depth \H^0 I \DB_X^k$. Therefore we obtain:

\begin{corollary}\label{cor:IC-van}
Conjecture \ref{conj:vanishing-general-IC} holds when $X$ has isolated,  or $(k-1)$-rational local complete intersection singularities, or when $k = 0$.
\end{corollary}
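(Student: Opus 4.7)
The plan is to combine two inputs, both of which are either already available or reduce to previously established injectivity theorems in each of the three cases. First, regardless of the specific case, I would apply the derived-category vanishing principle of Proposition \ref{prop:derived} to the object $A^\bullet = I\DB_X^k$. Its higher cohomology sheaves are supported on the locus where $X$ fails to be pre-$k$-rational, which by hypothesis has dimension at most $s$ (using Proposition \ref{prop:rat_IC} locally on the pre-$k$-rational open). This produces the intermediate bound
\[\H^i I\DB_X^k = 0 \quad \text{for} \quad 0 < i < n_k - s - 1,\]
with $n_k := \min\{\depth \H^0 I\DB_X^k,\, \depth I\DB_X^k + 1\}$, which is exactly ($\ref{eqn:depth-IC}$).

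Second, I would upgrade $n_k$ to $\depth \H^0 I\DB_X^k$ by invoking the intersection-complex injectivity of Conjecture \ref{conj:IC2}. In each of the three cases listed --- $k = 0$, isolated singularities, or local complete intersections --- this is provided by Corollary \ref{cor:conj-IC}, and says that
\[\RHom_{\O_X}(I\DB_X^k,\omega_X^{\bullet}) \to \RHom_{\O_X}(\H^0 I\DB_X^k,\omega_X^{\bullet})\]
is injective on cohomology. Translating this into $\sExt$-sheaves via the characterization ($\ref{eqn:depth}$), one obtains $\depth I\DB_X^k \ge \depth \H^0 I\DB_X^k$ (whenever the right-hand $\sExt^{-i}$ vanishes, injectivity forces the left-hand one to vanish as well). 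This is the same mechanism used in the proof of Theorem \ref{thm:vanishing-isolated}, transposed from $\DB_X^k$ to $I\DB_X^k$. Substituting into the first step yields $n_k = \depth \H^0 I\DB_X^k$ and therefore the desired vanishing.

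The only place where genuine input is needed is the injectivity statement of Corollary \ref{cor:conj-IC}; it is not formal, but rests on Theorem \ref{thm:IC1} together with the corresponding injectivity for the ordinary Du Bois complex (Conjecture \ref{conj:injectivity-H0DB}), which is known precisely in the three cases at hand --- from \cite{KS-injectivity} for $k=0$, from Theorem \ref{thm:injectivity-H0DB} for isolated singularities, and from \cite{MP-lci} for local complete intersections. Beyond this, the derivation of Corollary \ref{cor:IC-van} itself is routine; the only mild verification needed is that $I\DB_X^k$ has cohomology in non-negative degrees, so that Proposition \ref{prop:derived} applies, which is standard for the intersection complex and consistent with Remark \ref{rmk:H0-IC}.
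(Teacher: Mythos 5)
Your proof is correct and follows essentially the same route as the paper: both apply the depth-based vanishing principle (Proposition \ref{prop:derived}, or equivalently the argument of Proposition \ref{prop:vanishing-general}) to $A^\bullet = I\DB_X^k$, use Proposition \ref{prop:rat_IC} to control the support of the higher cohomology sheaves, and then upgrade $n_k$ to $\depth \H^0 I\DB_X^k$ via the injectivity of Corollary \ref{cor:conj-IC}, exactly as in Theorem \ref{thm:vanishing-isolated}. The one tacit ingredient worth flagging (in both your write-up and the paper's) is that deducing $\H^i I\DB_X^p = 0$ for $i>0$ on the pre-$k$-rational open from the isomorphism $\varphi_p$ also uses that pre-$k$-rational implies pre-$k$-Du Bois, i.e., $\H^i\DB_X^p=0$ there.
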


We also have the analogue of the vanishing result for the Du Bois complex in \cite[Corollary 13.9]{MP-LC}; see Example \ref{ex:vanishing-lci}.

\begin{corollary}\label{cor:IC-vanishing-LCI}
 Let $X$  be a local complete intersection with $\dim X_{\rm sing} = s$. Then
  $$ \H^i I \DB_X^k = 0, \,\,\,\,\,\,{\rm for ~all} \,\,\,\, 0 <  i < n -k -s -1.$$ 
\end{corollary}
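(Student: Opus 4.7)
The plan is to apply the general bound (\ref{eqn:depth-IC}),
\[\H^i I\DB_X^k = 0 \quad \text{for}\quad 0<i<\min\{\depth \H^0 I\DB_X^k,\ \depth I\DB_X^k +1\} - s - 1,\]
whose only hypothesis is that $X$ be smooth away from a closed subset of dimension $s$, and this is automatic here since $\dim X_{\rm sing} = s$. It thus suffices to show that both $\depth \H^0 I\DB_X^k$ and $\depth I\DB_X^k$ are at least $n-k$.

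I would first reduce to the nonvacuous range $n-k-s\ge 3$, in which $\codim X_{\rm sing}\ge k+3$. By \cite[Corollary 3.1]{MV}, in this regime $X$ is normal and $\Omega_X^k$ is reflexive, i.e.\ $\Omega_X^k = \Omega_X^{[k]}$. Let $f\colon \widetilde X\to X$ be a resolution and let $j\colon X_{\rm reg}\hookrightarrow X$ denote the inclusion of the smooth locus. Remark \ref{rmk:H0-IC} gives $\H^0 I\DB_X^k\simeq f_*\Omega_{\widetilde X}^k$. Combining the natural pullback-adjunction $\alpha\colon \Omega_X^k\to f_*\Omega_{\widetilde X}^k$ with the torsion-free inclusion $\beta\colon f_*\Omega_{\widetilde X}^k\hookrightarrow j_*\Omega_{X_{\rm reg}}^k = \Omega_X^{[k]} = \Omega_X^k$, the composition $\beta\circ\alpha$ is the identity on $X_{\rm reg}$. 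Reflexivity of $\Omega_X^k$ then forces this composition to be the identity globally, and since $\beta$ is monic, both $\alpha$ and $\beta$ must be isomorphisms. Hence $\H^0 I\DB_X^k\simeq \Omega_X^k$, whose depth is at least $n-k$ by \cite[Lemma 1.8]{Greuel}, exactly as in Example \ref{ex:vanishing-lci}.

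For $\depth I\DB_X^k$ I would exploit the Hodge-module self-duality $\DDD_X(\IC_X\QQ^H)\simeq \IC_X\QQ^H(n)$, which via (\ref{eqn:duality-MHM}) yields $\DD_X(I\DB_X^k)\simeq I\DB_X^{n-k}$, and hence $\sExt^{-i}_{\O_X}(I\DB_X^k,\omega_X^\bullet)\simeq \H^{n-i}I\DB_X^{n-k}$. The Steenbrink-type vanishing $\H^j I\DB_X^{n-k} = 0$ for $j>k$, inherited from Saito's theory applied to the pure Hodge module $\IC_X\QQ^H$ as in the ordinary Du Bois case, kills this Ext sheaf for $i<n-k$, giving $\depth I\DB_X^k\ge n-k$. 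Plugging both depth bounds into (\ref{eqn:depth-IC}) completes the proof.

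The main obstacle is the identification $f_*\Omega_{\widetilde X}^k\simeq \Omega_X^k$ without assuming any rationality of the singularities of $X$: it hinges precisely on the reflexivity of $\Omega_X^k$ furnished by \cite{MV} in the codimension regime where the statement is nontrivial. Without this reflexivity, $f_*\Omega_{\widetilde X}^k$ could a priori be a strict subsheaf of $\Omega_X^{[k]}$ with insufficient depth, and the argument would fail.
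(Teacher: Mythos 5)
Your proposal is correct and follows essentially the same route as the paper: invoke the general depth bound~(\ref{eqn:depth-IC}), get $\depth I\DB_X^k\ge n-k$ from the self-duality $\DD_X(I\DB_X^k)\simeq I\DB_X^{n-k}$, and get $\depth\H^0 I\DB_X^k\ge n-k$ via $\H^0 I\DB_X^k\simeq f_*\Omega_{\widetilde X}^k\simeq\Omega_X^{[k]}\simeq\Omega_X^k$ (using \cite[Corollary 3.1]{MV} for reflexivity in the nonvacuous codimension range) and then \cite[Lemma 1.8]{Greuel}. You spell out the identification $f_*\Omega_{\widetilde X}^k\simeq\Omega_X^k$ via the retraction $\beta\circ\alpha$ and reflexivity slightly more explicitly than the paper's terse ``as in Example~\ref{ex:vanishing-lci}'', but the substance is identical.
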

\begin{proof} 
We use ($\ref{eqn:depth-IC}$). Note first that for the intersection complex, it is always the case that $\depth I \DB_X^k\ge n-k$  by ($\ref{eqn:depth}$). Indeed, we have
$$\E xt^i_{\O_X}(I\DB_X^k, \omega_X^{\bullet})  =  0 \,\,\,\,\,\,{\rm for}\,\,\,\, i > k - n$$
due to the self-duality (up to twist) of the intersection complex, as explained in Theorem \ref{thm:injectivity-Rk}.

Moreover, following precisely the steps in  Example \ref{ex:vanishing-lci}, under the current hypotheses we also have that 
$\depth \H^0  I \DB_X^k\ge n-k$;  indeed, if $f\colon \widetilde{X} \to X$ is a resolution of singularities, we know that $ \H^0  I \DB_X^k \simeq f_* \Omega^k_{\widetilde X}$ 
(see Remark \ref{rmk:H0-IC}), which in this case is reflexive, 
so that we have 
$$ f_* \Omega_{\widetilde X}^k \simeq \Omega_X^{[k]} \simeq \Omega_X^k.$$
\end{proof}


\end{document}